\newtheorem{theorem}{Theorem}
\newtheorem{lemma}{Lemma}
\newtheorem{proposition}{Proposition}
\newtheorem{remark}{Remark}
\numberwithin{theorem}{section}
\crefname{remark}{Remark}{Remarks}
\crefname{assumption}{Assumption}{Assumptions}
\crefname{algorithm}{Algorithm}{Algorithms} 
\crefname{theorem}{Theorem}{Theorems} 
\crefname{lemma}{Lemma}{Lemmas} 
\crefname{figure}{Figure}{Figures}
\begin{document}

\begin{frontmatter}



\title{Randomized Greedy Algorithms for Neural Network Optimization
}

\author[1]{Jinchao Xu}
\author[1]{Xiaofeng Xu
\footnote{Corresponding author: xiaofeng.xu@kaust.edu.sa}
}

\affiliation[1]{organization={Applied Mathematics and Computational Sciences Program, Computer, Electrical and Mathematical Science and Engineering Division, King Abdullah University of Science and Technology~(KAUST)},
            addressline={},
            city={Thuwal},
            postcode={23955},
            state={},
            country={Saudi Arabia}}

\begin{abstract}
Greedy algorithms have been successfully analyzed and applied in training neural networks for solving variational problems, ensuring guaranteed convergence orders.
In this paper, we extend the analysis of the orthogonal greedy algorithm (OGA) to convex optimization problems, establishing its optimal convergence rate.
 This result broadens the applicability of OGA by generalizing its optimal convergence rate from function approximation to convex optimization problems.
 In addition, we also address the issue regarding practical applicability of greedy algorithms, which is due to significant computational costs from the subproblems that involve an exhaustive search over a discrete dictionary. 
 We propose to use a more practical approach of randomly discretizing the dictionary at each iteration of the greedy algorithm. 
 We quantify the required size of the randomized discrete dictionary and prove that, with high probability, the proposed algorithm realizes a weak greedy algorithm, achieving optimal convergence orders.
Through numerous numerical experiments on function approximation, linear and nonlinear elliptic partial differential equations, we validate our analysis on the optimal convergence rate and demonstrate the advantage of using randomized discrete dictionaries over a deterministic one by showing orders of magnitude reductions in the size of the discrete dictionary, particularly in higher dimensions. 


\end{abstract}

\begin{keyword}
Weak greedy algorithms, Randomized dictionaries, Neural network optimization, Partial differential equations
\end{keyword}


\end{frontmatter}

\section{Introduction}
\label{sec:intro}
In recent years, neural networks have been widely used in scientific computing, particularly for solving partial differential equations~(PDEs) numerically.
Notable approaches include, e.g., physics-informed neural networks~(PINNs) \cite{Cuomo:2022,RPK:2019}, the deep Ritz method~\cite{EY:2018}, and the finite neuron method~\cite{Xu:2020}.
Each of these methods aims to find optimal parameters for a given neural network by minimizing a given energy functional, essentially solving an optimization problem of the form
\begin{equation}
\label{NN_optimization}
\min_{v \in \mathcal{F}} E(v),
\end{equation}
where $\mathcal{F}$ denotes a class of functions that can be represented by a neural network, and $E \colon \mathcal{F} \rightarrow \mathbb{R}$ is a suitable energy functional.

Despite the superior expressivity of neural networks compared to conventional approaches for numerical PDEs~\cite{HMX:2023,HX:2023}, the optimization problem~\eqref{NN_optimization} is highly non-convex and ill-conditioned. 
In~\cite{HSTX:2022}, it is proven that even training the linear layers in shallow neural networks is ill-conditioned.
This makes it extremely difficult to find a global optimum, even for shallow neural networks, which significantly undermines the application of neural networks in solving PDEs.

Due to the difficulty of training neural networks for solving PDEs mentioned above, various training algorithms tailored for solving PDEs, besides conventional gradient descent-based algorithms~(see, e.g.,~\cite{GBC:2016}), have been designed to tackle optimization problems of the form~\eqref{NN_optimization}.
The hybrid least squares gradient descent method in~\cite{CGPPT:2020} was motivated by the adaptive basis viewpoint.
The active neuron least squares method~\cite{AS:2021,AS:2022}, which manually adjusts position of the neurons according to the training data, was designed to escape from plateaus during training.
The neuron-wise parallel subspace correction method was proposed in~\cite{PXX:2022}, providing a way to precondition the linear layers in shallow neural networks and adjust neurons in a parallel manner.
Random feature methods~\cite{CCEY:2022,EMW:2020,LL:2024,RR:2007} and extreme learning machines~\cite{DL:2021,HZS:2006,LLR:2024} have also gained attention as powerful methods for solving PDEs using neural networks.
Recently, greedy algorithms~\cite{DT:1996,Jones:1992,MZ:1993,Temlyakov:2011} have been successfully applied in neural network optimization; see~\cite{DPW:2021,SHJHX:2023,SX:2022a}.
Most notably, the orthogonal greedy algorithm~(OGA)~\cite{SHJHX:2023,SX:2022a} for solving the optimization problem~\eqref{NN_optimization} has been shown to achieve the optimal approximation rates of shallow neural networks~\cite{SX:2024}.

Despite the guaranteed approximation properties of greedy algorithms, they have a significant drawback; each iteration requires solving a non-convex optimization subproblem~\cite[Section~6]{SHJHX:2023}.
Obtaining an exact optimum for this subproblem is very challenging in general; see~\cite{BM:2002} for some special cases.
Consequently, an approximate solution is typically obtained by searching over a sufficiently fine discretized dictionary of finite size~\cite{SHJHX:2023}.
However, this approach becomes extremely challenging and nearly infeasible in higher dimensions, where the required size of the discretized dictionary becomes exceptionally large.
Therefore, the application of greedy algorithms to higher-dimensional problems is mostly limited due to computational constraints, except for problems where the target functions have very special structures, such as being additively separable~\cite{SHJHX:2023}. 

Given these significant challenges, in this paper, we propose a randomized approach for discretizing dictionaries in greedy algorithms for the optimization of shallow neural networks, with a focus on OGA.
We extend the convergence result of OGA to its weak counterpart~\cite{Temlyakov:2000}, which allow for inexact solutions of the non-convex subproblems. 
One notable contribution is that we generalize the convergence theorem of weak orthogonal greedy algorithm (WOGA) to convex optimization problems. 
We then use these results to analyze our proposed randomized discretization approach.
More precisely, we prove that, with high probability, the proposed approach applied to OGA realizes WOGA, achieving the optimal approximation rate same as the standard OGA.
In addition, we quantify the size of the randomized discrete dictionary that is sufficient to realize WOGA.
The practical performance of the proposed approach is verified by various numerical experiments involving function approximation problems and PDEs, demonstrating orders of magnitude reductions in the size of the discrete dictionary when compared with using a deterministic one. 

The rest of the paper is organized as follows.
In \cref{sec:models}, we introduce our model problems, and discuss relevant approximation results.
In \cref{sec:greedy}, we introduce weak greedy algorithms, and present the convergence result of the WOGA in the setting of convex optimization. 
In \cref{sec:discrete}, we discuss schemes for dictionary discretization and propose a randomized version. 
In \cref{sec:analysis}, we present OGA that uses discrete dictionaries of finite size.
We quantify the required size of a discrete dictionary to guarantee realizations of WOGA, and show that with high probability, our proposed randomized algorithm achieves the same convergence rate of WOGA.
In \cref{sec:experiments}, we demonstrate the effectiveness and broad application of our proposed method through numerous numerical experiments on function approximation problems and elliptic PDEs, and introduce a Fixed-Size Iterative OGA. 
Finally, we give concluding remarks in \cref{sec:conclusion}. 

\section{Neural network optimization}
\label{sec:models}
In this section, we review neural network optimization problems appearing in the finite neuron method~\cite{Xu:2020} and the approximation properties of $\text{ReLU}^k$ shallow neural networks.

\subsection{Finite neuron method}
Let $\Omega \subset \mathbb{R}^d$ be a bounded domain and $V \subset L^2 (\Omega)$ be a Hilbert space consisting of functions on $\Omega$.
Various elliptic boundary value problems can be formulated as optimization problems of the form
\begin{equation}
\label{variational_problem}
\min_{v \in V}  E(v). 
\end{equation}
For example, for the following elliptic PDE: 
\begin{align*}
- \Delta u + u = f & \quad \text{ in } \Omega, \\
\frac{\partial u}{\partial n} =  0 & \quad \text{ on } \partial \Omega,
\end{align*}
the corresponding energy minimization formulation is 
\begin{equation*}
    \min_{v \in V} \left\{ E(v) = \frac{1}{2} a(v,v) - \int_{\Omega} f dx \right\}, 
\end{equation*}
where $a(\cdot, \cdot)$ is a bilinear form given by $$a(u,v) = \int_{\Omega} \left( \nabla u \cdot \nabla v  + uv \right) dx,
\quad u ,v \in H^1 (\Omega).$$
For more general settings on differential operators and boundary conditions, see~\cite{Xu:2020}.

In the finite neuron method, we minimize the energy functional $E$ given in~\eqref{variational_problem} within a class $\mathcal{F}$ of functions that can be represented by a given neural network.
Namely, we solve the minimization problem
\begin{equation}
\label{FNM}
\min_{v \in \mathcal{F}} E(v).
\end{equation}

The convergence analysis of the finite neuron method in terms of the approximation properties of the class $\mathcal{F}$ was provided in~\cite{Xu:2020}.
Additionally, various numerical algorithms for the finite neuron method have been studied in~\cite{JLS:2024,PXX:2022,SHJHX:2023,SX:2022a}.

\subsection{Shallow neural networks}
Here, we discuss the function classes represented by shallow neural networks and their approximation properties.
Recall that a shallow neural network with $n$ neurons, $d$-dimensional input, and $\operatorname{ReLU}^k$ activation~($k \in \mathbb{N} $) is written as
\begin{equation*}
u_n(x) = \sum_{i = 1}^n  a_i \sigma_k (\omega_i \cdot x + b_i),
\quad x \in \mathbb{R}^d,
\end{equation*}
for parameters $\{ a_i \}_{i=1}^n \subset \mathbb{R}$, $\{ \omega_i \}_{i=1}^n \subset S^{d-1}$, and $\{ b_i \}_{i=1}^n \subset \mathbb{R}$, where $\sigma_k$, defined as $\operatorname{ReLU}^k \colon \mathbb{R} \rightarrow \mathbb{R}$, is the activation function 
\begin{equation*}
    \sigma_k (t) = \operatorname{ReLU}(t)^k,\quad \operatorname{ReLU}(t) = {\max}(0, t),
    \quad t \in \mathbb{R},
\end{equation*}
and $S^{d-1}$ represents the unit $d$-sphere in $\mathbb{R}^d$ centered at the origin.
Following~\cite{SHJHX:2023,SX:2024}, we examine the function class $\Sigma_{n, M} (\mathbb{D})$, defined for some $M > 0$, as
\begin{equation}\label{eq:sigma_n_M}
\Sigma_{n,M} (\mathbb{D}) = \left\{ \sum_{i=1}^n a_i d_i : a_i \in \mathbb{R}, \text{ } d_i \in \mathbb{D}, \text{ } \sum_{i = 1}^n |a_i| \leq M \right\},
\end{equation}
where the dictionary $\mathbb{D}$ is symmetric and given by
\begin{equation}
\label{dictionary}
\mathbb{D} = \mathbb{P}_k^d:=
\left\{\pm \sigma_k (\omega \cdot x + b ): \omega \in S^{d-1}, \text{ } b \in\left[c_1, c_2\right]\right\},
\end{equation}
with two constants $c_1$ and $c_2$ chosen to satisfy
\begin{equation}
\label{c1c2}
c_1 \leq \inf \left\{\omega \cdot x : x \in \Omega, \text{ } \omega \in S^{d-1}\right\}
\leq \sup \left\{\omega \cdot x: x \in \Omega, \text{ } \omega \in S^{d-1}\right\} \leq c_2 .
\end{equation}
Throughout this paper, we denote the function class $\mathcal{F}$ in~\eqref{NN_optimization} by $\Sigma_{n, M} (\mathbb{D})$.

Recently, a sharp bound on the approximation rate of $\Sigma_{n, M} (\mathbb{D})$ was established in~\cite{SX:2024}.
We present the approximation results of $\Sigma_{n, M}$ in the space $V = H^m (\Omega)$, where $\Omega \subset \mathbb{R}^d$ be a bounded domain and $H^m (\Omega)$ is the Hilbert space of functions with square-integrable $m$th-order partial derivatives.
For $m = 0$, $H^0(\Omega)$ is the standard $L^2(\Omega)$ space. 

To describe the result from~\cite{SX:2024}, we introduce several mathematical notions.
We first consider the set
\begin{equation}
\label{eq:B1D}
B_1 (\mathbb{D}) = \overline{ \left\{ \sum_{i=1}^n a_i d_i : n \in \mathbb{N}, \text{ } d_i \in \mathbb{D}, \text{ } \sum_{i=1}^n |a_i| \leq 1 \right\} },
\end{equation}
which is the $H^m (\Omega)$-closure of the convex, symmetric hull of $\mathbb{D}$.
The variation norm $\| \cdot \|_{\mathcal{K}_1 (\mathbb{D})}$ with respect to the dictionary $\mathbb{D}$ is defined as
\begin{subequations}
\label{variation_space}
\begin{equation}
\| v \|_{\mathcal{K}_1 (\mathbb{D})} = \inf \left\{ c > 0 : v \in c B_1 (\mathbb{D}) \right\},
\quad v \in H^m (\Omega),
\end{equation}
and the corresponding variation space $\mathcal{K}_1 (\mathbb{D})$ is given by
\begin{equation}
\mathcal{K}_1 (\mathbb{D}) = \left\{ v \in H^m (\Omega) : \| v \|_{\mathcal{K}_1 (\mathbb{D})} < \infty \right\}.
\end{equation}
\end{subequations}
For a detailed characterization of this variation space $\mathcal{K}_1 (\mathbb{D})$, see \cite{SX:2023}.
If the target function $u$ belongs to $\mathcal{K}_1 (\mathbb{D})$, the best approximation $u_n \in \Sigma_{n, M } (\mathbb{D})$ enjoys the following approximation rate.

\begin{proposition}{\cite[Theorems~2 and~3]{SX:2024}}
\label{Prop:rate}
Given $u \in \mathcal{K}_1 (\mathbb{D})$, there exists a positive constant $M$ depending on $k$ and $d$ such that
\begin{equation*}
\inf_{u_n \in \Sigma_{n, M} (\mathbb{D})} \| u - u_n \|_{H^m (\Omega)} \lesssim \| u \|_{\mathcal{K}_1 (\mathbb{D})} n^{-\frac{1}{2} - \frac{2(k-m)+1}{2d}},
\end{equation*}
where $\Sigma_{n, M} (\mathbb{D})$ and $\mathcal{K}_1 (\mathbb{D})$ were given in~\eqref{eq:sigma_n_M} and~\eqref{variation_space}, respectively. 
\end{proposition}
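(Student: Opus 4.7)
The plan is to combine a Maurey/Jones/Barron-type convex-hull bound, which by itself gives only the rate $n^{-1/2}$, with a sharp entropy/Kolmogorov-width estimate for $B_1(\mathbb{D})$ in $H^m(\Omega)$ that accounts for the extra factor $n^{-(2(k-m)+1)/(2d)}$. First, I would use the definition of $\mathcal{K}_1(\mathbb{D})$ in~\eqref{eq:B1D}--\eqref{variation_space} to write $u$, up to an arbitrarily small $H^m$-error, as a finite combination $\sum_i a_i d_i$ with $d_i \in \mathbb{D}$ and $\sum_i |a_i| \le (1+\varepsilon)\|u\|_{\mathcal{K}_1(\mathbb{D})}$. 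Normalizing, this identifies $u$ with a signed measure on $\mathbb{D}$ whose total variation is at most $\|u\|_{\mathcal{K}_1(\mathbb{D})}$, and the classical Maurey lemma then produces an $n$-term approximant $u_n \in \Sigma_{n,M}(\mathbb{D})$ satisfying $\|u - u_n\|_{H^m} \le C \|u\|_{\mathcal{K}_1(\mathbb{D})} K_{\mathbb{D}} \, n^{-1/2}$, where $K_{\mathbb{D}} = \sup_{d \in \mathbb{D}} \|d\|_{H^m}$ and $M$ can be chosen proportional to $\|u\|_{\mathcal{K}_1(\mathbb{D})}$.

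To sharpen this to the stated rate, I would establish an entropy estimate for the unit ball $B_1(\mathbb{D})$ in $H^m(\Omega)$. The dictionary $\mathbb{P}_k^d$ is a smooth $d$-dimensional parametric family, parametrized by $(\omega,b) \in S^{d-1} \times [c_1,c_2]$, and differentiating $\sigma_k$ in $x$ reduces its smoothness by $m$ while leaving $k-m$ weak derivatives in $L^2(\Omega)$. A covering-number argument, combining a net on the parameter domain with standard Sobolev embedding estimates, yields that the Kolmogorov $n$-width of $B_1(\mathbb{D})$ in $H^m(\Omega)$ decays at the rate $n^{-(2(k-m)+1)/(2d)}$; the $+1$ in the numerator reflects the univariate bias parameter $b$, while the $d$ in the denominator reflects the dimension of $S^{d-1}$.

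The final step is to invoke a Makovoz-type refinement that converts a Kolmogorov-width bound $d_n(B_1(\mathbb{D}))_{H^m(\Omega)} \lesssim n^{-\alpha}$ into an improved $n$-term approximation rate of the form $\inf_{u_n \in \Sigma_{n,M}(\mathbb{D})} \|u - u_n\|_{H^m(\Omega)} \lesssim \|u\|_{\mathcal{K}_1(\mathbb{D})}\, n^{-1/2-\alpha}$. Setting $\alpha = (2(k-m)+1)/(2d)$ gives the target bound, and the constant $M$ depending only on $k$ and $d$ is read off from the Makovoz construction. The main obstacle lies in the middle step: obtaining the \emph{sharp} entropy decay, which requires a delicate balance between the smoothness of $\sigma_k$ in $x$ and its regularity as a function of the parameters, together with a matching lower bound to confirm optimality of the exponent. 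The probabilistic Makovoz refinement, while technical, is by now a standard tool once such entropy information is in hand.
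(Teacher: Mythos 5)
The paper does not prove this proposition; it quotes it directly from \cite[Theorems~2~\&~3]{SX:2024}, so there is no in-paper proof to compare against. Your sketch is a plausible high-level reconstruction of the argument in the cited reference: a Maurey-type bound for the base $n^{-1/2}$ rate, a sharp metric-entropy estimate for $B_1(\mathbb{D})$ in $H^m(\Omega)$, and a Makovoz-type refinement to combine the two into the stated exponent. However, your interpretation of the exponent $\frac{2(k-m)+1}{2d} = \frac{k-m+\frac{1}{2}}{d}$ is off. The $+1$ in the numerator is not contributed by the bias variable $b$; the bias is one of the $d$ parameter coordinates and contributes to the denominator $d$ together with the $(d-1)$-dimensional sphere $S^{d-1}$. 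The $+\frac{1}{2}$ in $k-m+\frac{1}{2}$ instead reflects the half-order Sobolev smoothness of $\operatorname{ReLU}^k$: its $k$-th weak derivative is a bounded step function, which places $\sigma_k$ in $H^{k+1/2-\epsilon}$ but not $H^{k+1/2}$, and after measuring in $H^m$ the residual smoothness $k-m+\frac{1}{2}$ is what drives the entropy decay over the $d$-dimensional parameter manifold. You also dispose of the hardest part of the argument---proving the sharp entropy estimate together with the matching lower bound that certifies it cannot be improved---with the phrases ``delicate balance'' and ``standard tool,'' but this is precisely where the cited reference concentrates its technical effort; the lower bound in particular does not follow from a covering argument alone.
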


\section{Weak greedy algorithms}
\label{sec:greedy}
Greedy algorithms are well-established methods that have been extensively studied over the past decades for various applications~\cite{DT:1996,Jones:1992,MZ:1993,Temlyakov:2011}.
Recently, the application of greedy algorithms to neural network optimization has been explored in~\cite{DPW:2021,SHJHX:2023,SX:2022a}.
In this section, we briefly introduce the pure greedy algorithms, the weak relaxed greedy algorithm~(WRGA). 
We focus on the weak orthogonal greedy algorithm (WOGA) and provide a general convergence result under the convex optimization setting.
This convergence result will be an important tool in the convergence analysis for the orthogonal greedy algorithms with discrete dictionaries as introduced in~\cref{sec:discrete}. 
Notably, we prove that convergence of WOGA can be extended to convex optimization, and WOGA preserve the same optimal convergence rate as the original greedy algorithm.

We now introduce the greedy algorithms in a setting of minimizing a convex, $L$-smooth energy functional $E \colon V \rightarrow \mathbb{R}$. 
In this section, we assume that the Hilbert space $V$ is a Sobolev space $H^m({\Omega})$ equipped with the inner product $\left< \cdot, \cdot \right>$ and its induced norm $\| \cdot \|$.
 We denote by $\nabla E (v)$ the variational derivative of the energy functional $E$ at $v$. 
Let $\mathbb{D}$ be a symmetric dictionary, that is, if $g \in \mathbb{D}$, then $-g \in \mathbb{D} $. Assume that $\max_{g \in \mathbb{D}} \| g\| = C < \infty$.
Given a current iterate $u_{n-1}$, greedy algorithms involve solving a common subproblem that reads as  
\begin{equation}
    \label{argmax}
    g_n = \operatornamewithlimits{\arg\max}_{g \in \mathbb{D}}  \langle g, \nabla E (u_{n-1}) \rangle. 
\end{equation}
Then, the $n$th step of the pure greedy algorithm (with shrinkage $s > 0$) \cite{Temlyakov:2011, KS:2023} reads as 
\begin{subequations}
\label{GA}
\begin{align}
u_n &= u_{n-1} -s \langle g_n , \nabla E (u_{n-1})\rangle g_n.
\end{align}
\end{subequations}
The $n$th step of the relaxed greedy algorithm (RGA) adopts a relaxation approaches and reads as 
\begin{equation}
    u_n = \left( 1 - \alpha_n \right) u_{n-1} -  \alpha_n M g_n.
\end{equation}
for an appropriately chosen $\alpha_n \in (0,1]$ and $M > 0$. 
We refer the readers to \cite{BCDD:2008,DT:1996,Jones:1992,SHJHX:2023, Zhang:2003} for details on RGA. 
For OGA, we will discuss it in more details later in this section. 

We note that at each iteration of greedy algorithms, we must solve the optimization problem~\eqref{argmax}, which is challenging and computationally intractable for practical implementation.
To make it more ready for practical implementation, one possible strategy is to replace the problem~\eqref{argmax} with a weaker version: we find $g_n \in \mathbb{D}$ such that
\begin{equation}
\label{argmax_weak}
\langle g_n ,\nabla E (u_{n-1}) \rangle  \geq \gamma_n \max_{g \in \mathbb{D}} \langle g, \nabla E (u_{n-1}) \rangle 
\end{equation}
for a given parameter $\gamma_n \in (0, 1]$.
That is, we solve~\eqref{argmax} only approximately so that the value of $\langle g_n, \nabla E (u_{n-1}) \rangle $ closely approximates the maximum of $ \langle  g, \nabla E (u_{n-1}) \rangle $ over $g \in \mathbb{D}$.
Greedy algorithms that adopt~\eqref{argmax_weak} are called \textit{weak} greedy algorithms. 

Now we focus on WOGA under a general setting of minimizing an energy functional $E: V \to \mathbb{R}$ in \cref{alg:WOGA-convex}. 
It is also known as the weak Chebyshev greedy algorithm \cite{DT:2022biorthogonal,Temlyakov:2015greedyconvex}.

\begin{algorithm}[H]
\caption{Weak orthogonal greedy algorithm (WOGA) for convex optimization}
\label{alg:WOGA-convex}
\begin{algorithmic}
    \State Given a dictionary $\mathbb{D}$, $u_0 = 0$, and parameters $\{ \gamma_n \in (0, 1] \}_{n=1}^\infty $,
    \For{$n = 1, 2, \dots$}
        \State Find $g_n \in \mathbb{D}$ such that 
        \begin{equation}
        \label{g_n_WOGA}
         \langle g_n, \nabla E(u_{n-1}) \rangle  \geq \gamma_n \max_{g \in \mathbb{D}} \langle g, \nabla E (u_{n-1}) \rangle.
        \end{equation}
        \State Find $u_n \in H_n :=\operatorname{span} \{ g_1, \dots, g_n \}$ such that
        \begin{equation}
        \label{eq:oga-minimization}
    u_n \in \operatornamewithlimits{\arg\min}_{v \in H_n} E(v).
    \end{equation}
    \EndFor
\end{algorithmic}
\end{algorithm}

Note that~\eqref{eq:oga-minimization} is equivalent to
\begin{equation}\label{eq:oga-variational}
            \langle \nabla E(u_n), w \rangle  = 0
            \quad \forall w \in H_n.
        \end{equation}
In \cref{alg:WOGA-convex}, the weak parameter $\gamma_n$ could vary in each iteration.
If $\gamma_n = \gamma \in (0,1]$ for all $n \geq 1$, then we obtain a special variant of the WOGA.
If $\gamma_n = 1$ for all $n\geq 1$, we obtain the Chebyshev greedy algorithm \cite{Temlyakov:2015greedyconvex}. 

As an important special case, if the energy functional $E(v) = \frac{1}{2} \|u -v \|^2$, we obtain WOGA for function approximation problems. 
The subproblem \eqref{g_n_WOGA} becomes finding $g_n \in \mathbb{D}$ such that 
\begin{equation*} 
    \langle g_n, u - u_{n-1} \rangle  \geq \gamma_n \max_{g \in \mathbb{D}} \langle g, u - u_{n-1}) \rangle.
\end{equation*}
The subproblem~\eqref{eq:oga-minimization} becomes an orthogonal projection onto the subspace $H_n$.

Similarly, if $\gamma_n = \gamma \in (0,1]$, we obtain another version of the WOGA for function approximation. 
If $\gamma = 1$, WOGA reduces to the original OGA \cite{PRK:1993,DT:2019,DT:1996,LS:2024,SX:2022a}.
The convergence analysis of OGA for function approximation was first carried out in~\cite{DT:1996}; see also~\cite{BCDD:2008}. 
For WOGA, the convergence analysis is done in~\cite{Temlyakov:2000}. 
Both analyses lead to an $O(n^{- \frac{1}{2}})$ convergence rate. 
Recently, in~\cite{SX:2022a, LS:2024}, an improved convergence rate was obtained for OGA for function approximation when $B_1 (\mathbb{D})$ has small entropy. The rate is optimal for ReLU$^k$ shallow neural network optimization. 
We follow the proof in~\cite{LS:2024} and generalize the improved convergence results to WOGA for convex optimization in \cref{Thm:WOGA-convex}.
As a result, the convergence of WOGA for function approximation in the Hilbert space $V$ follows as a corollary. 

Before proceeding, we state two lemmas that will play a crucial role in the convergence analysis of WOGA.

\begin{lemma}
\label{Lem:inner_product}
Given $f \in \mathcal{K}_1 (\mathbb{D})$ and $h \in V$, we have
\begin{equation*}
\langle f, h \rangle \leq \| f \|_{\mathcal{K}_1 (\mathbb{D})} \max_{g \in \mathbb{D}} \langle g, h \rangle,
\end{equation*}
where $\mathcal{K}_1 (\mathbb{D})$ was given in~\eqref{variation_space}.
\end{lemma}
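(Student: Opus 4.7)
The plan is to unpack the definition of the variation norm $\| f \|_{\mathcal{K}_1(\mathbb{D})}$ and exploit the symmetry of the dictionary $\mathbb{D}$ to reduce the bound to a trivial estimate on linear combinations.

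First, I would fix $\epsilon > 0$ and set $c = \| f \|_{\mathcal{K}_1(\mathbb{D})} + \epsilon$. By the definition of the variation norm, $f \in c B_1(\mathbb{D})$, and the definition of $B_1(\mathbb{D})$ in~\eqref{eq:B1D} then supplies a sequence
\begin{equation*}
\tilde{f}_N = c \sum_{i=1}^{N} a_i^{(N)} d_i^{(N)}, \quad d_i^{(N)} \in \mathbb{D}, \quad \sum_{i=1}^{N} |a_i^{(N)}| \leq 1,
\end{equation*}
converging to $f$ in $H^m(\Omega)$, and hence in $L^2(\Omega)$ by the continuous embedding $H^m(\Omega) \hookrightarrow L^2(\Omega)$.

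Next I would bound $\langle \tilde{f}_N, h \rangle$ term by term, using the trivial inequality $a_i^{(N)} \langle d_i^{(N)}, h \rangle \leq |a_i^{(N)}| \cdot |\langle d_i^{(N)}, h \rangle|$. The crucial observation is that symmetry of $\mathbb{D}$ gives, for every $d \in \mathbb{D}$,
\begin{equation*}
|\langle d, h \rangle| = \max\bigl(\langle d, h \rangle, \langle -d, h \rangle\bigr) \leq \max_{g \in \mathbb{D}} \langle g, h \rangle,
\end{equation*}
so that $\langle \tilde{f}_N, h \rangle \leq c \max_{g \in \mathbb{D}} \langle g, h \rangle$. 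Passing to the limit $N \to \infty$ via continuity of $\langle \cdot , h \rangle$ in $L^2(\Omega)$, and then letting $\epsilon \to 0$, delivers the claim.

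I do not anticipate a real obstacle here; the argument is essentially the duality between $\mathcal{K}_1(\mathbb{D})$ and the seminorm $h \mapsto \max_{g \in \mathbb{D}} \langle g, h \rangle$. The only point that requires a brief comment is the symmetry step, which is needed to convert the one-sided maximum in the statement into the two-sided bound that naturally appears after applying the triangle inequality to the coefficient sum.
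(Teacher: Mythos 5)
Your proposal is correct and follows essentially the same route as the paper: unpack the variation norm to extract an approximating sequence of finite convex-symmetric combinations, bound each term, and pass to the limit. The one thing you make explicit that the paper leaves implicit is the symmetry step converting $|\langle d, h\rangle|$ into $\max_{g\in\mathbb{D}}\langle g,h\rangle$, which is indeed where the symmetry of $\mathbb{D}$ is used; this is a helpful clarification rather than a genuine departure.
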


The following result follows from the estimate on the metric entropy $\varepsilon_n(B_1(\mathbb{D}))_V  \lesssim  n^{-\frac{1}{2} - \frac{2(k-m) + 1}{2d}}$ for $V = H^{m}(\Omega)$ \cite{SX:2024} and \cite[Lemma 2.1]{LS:2024}. 
\begin{lemma}{\cite[Lemma 2.1]{LS:2024}}
\label{lem:entropy}
Let $\mathbb{D}$ be a $\operatorname{ReLU}^k$ dictionary as defined in \eqref{dictionary} in the space $H^m(\Omega)$, $k \geq m$. 
For any sequence $v_1, \ldots, v_n \in \mathbb{D}$, the following inequality holds:
\begin{equation*}
\left(\prod_{\ell=1}^n \left\| v_\ell - P_{\ell-1} v_\ell \right\|\right)^{\frac{1}{n}} \leq C n^{-\frac{2(k-m)+1}{2d}},
\end{equation*}
where $P_{\ell-1}$ denotes the orthogonal projection onto $\operatorname{span}\{v_1, \dots, v_{\ell-1}\}$, $C > 0$ is a constant that depends on $k$ and the dimension $d$. 
\end{lemma}

Now we present the convergence result for WOGA in \cref{Thm:WOGA-convex}.
\begin{theorem}
    \label{Thm:WOGA-convex}
    Assume that the energy functional $E$ is convex and $L$-smooth  in $H^m(\Omega)$.
    Let $u \in \mathcal{K}_1 (\mathbb{D})$ be a minimizer of $E$.
    In the weak orthogonal greedy algorithm~(\cref{alg:WOGA-convex}), we have
    \begin{equation*}
        E(u_n) - E(u) \leq 2 L C^2 \sqrt[n]{\prod_{\ell = 1}^n \left( \frac{1}{\gamma_\ell^2} \right) }  \| u \|_{\mathcal{K}_1 (\mathbb{D})}^2  
        n^{-1 - \frac{2(k-m)+1}{d}} ,
        \quad n \geq 1,
    \end{equation*}
where the constant $C$ is given in \cref{lem:entropy}.
Furthermore, if the energy functional $E$ is also $\mu$-strongly convex, we have 
\begin{equation*}
    \|u - u_n \| \leq 2 C \sqrt{\frac{L}{\mu}} \sqrt[n]{\prod_{\ell = 1}^n \left( \frac{1}{\gamma_\ell} \right) } \| u \|_{\mathcal{K}_1 (\mathbb{D})} n^{-\frac{1}{2} - \frac{2(k-m)+1}{2d}},
        \quad n \geq 1
\end{equation*}
If $\gamma_\ell = \gamma \in (0,1]$ for all $\ell \leq n$, we have 
\begin{equation*}
    \|u - u_n \| \leq \frac{2C}{\gamma} \sqrt{\frac{L}{\mu}} \| u \|_{\mathcal{K}_1 (\mathbb{D})} n^{-\frac{1}{2} - \frac{2(k-m)+1}{2d}},
        \quad n \geq 1 
\end{equation*}
\end{theorem}
\begin{proof}
For a given $n \geq 1$, let $\bar{g}_n = g_n - P_{n-1} g_n$, where $P_{n-1}$ denotes the orthogonal projection onto $H_{n-1}$.
It is clear that $\bar{g}_n \in H_n$.

Now, by optimality \eqref{eq:oga-minimization} and $L$-smoothness, we obtain
\begin{equation}\label{Thm2:OGA-convex}
    E(u_n)
    \stackrel{\eqref{eq:oga-minimization}}{\leq} E(u_{n-1} - \bar{\alpha} \bar{g}_n ) \\
    \leq E(u_{n-1}) - \bar{\alpha} \langle \nabla E (u_{n-1}), \bar{g}_n  \rangle + \frac{L}{2} \bar{\alpha}^2 \| \bar{g}_n \|^2
    = E(u_{n-1}) - \frac{\langle \nabla E (u_{n-1}), \bar{g}_n  \rangle^2}{2L \| \bar{g}_n \|^2},
\end{equation}
where the equality is obtained by setting 
$\bar{\alpha} = \frac{\langle \nabla E(u_{n-1}), \bar{g}_n \rangle}{L \| \bar{g}_n \|^2}$ . \\

Moreover, for any $v \in \mathcal{K}_1 (\mathbb{D})$ we have
\begin{multline}
\label{Thm1:OGA-convex}
\langle \nabla E(u_{n-1}), \bar{g}_n \rangle
\stackrel{\eqref{eq:oga-variational}}{=} \langle \nabla E(u_{n-1}), g_n \rangle
\geq \gamma_n \max_{g \in \mathbb{D}} \langle \nabla E(u_{n-1}), g \rangle \\
\stackrel{\text{(i)}}\geq  \frac{\gamma_n \langle \nabla E(u_{n-1}), -v \rangle}{\| v \|_{\mathcal{K}_1 (\mathbb{D})}}
\stackrel{\eqref{eq:oga-variational}}{=} \frac{  \gamma_n \langle \nabla E (u_{n-1}), u_{n-1} - v \rangle}{\| v \|_{\mathcal{K}_1 (\mathbb{D})} }
\stackrel{\text{(ii)}}{\geq} \frac{\gamma_n ( E(u_{n-1}) - E(v) )}{\| v \|_{\mathcal{K}_1 (\mathbb{D})}},
\end{multline}
where (i) and (ii) are due to \cref{Lem:inner_product} and the convexity of $F$, respectively.

Combining~\eqref{Thm2:OGA-convex} and~\eqref{Thm1:OGA-convex}, we obtain 
\begin{equation*}
    E(u_n) - E(v) \leq E(u_{n-1}) - E(v) - \frac{ \gamma_n^2 ( E(u_{n-1}) - E(v) )^2}{2 L \| v \|_{\mathcal{K}_1 (\mathbb{D})}^2 \| \bar{g}_n \|^2},
\end{equation*}
from which we define
\begin{equation*}
        a_n = \frac{ E(u_n) - E(v) }{2 L \|v\|_{\mathcal{K}_1 (\mathbb{D})}^2}, \quad
        b_n = \gamma_n^2  \|\bar{g}_n \|^{-2},
\end{equation*}
to obtain a recurrence relation $a_n \leq a_{n-1} (1 - b_n a_{n-1})$ . 

We then apply the same argument as in \cite[Theorem 4.2]{LS:2024} and invoke \cref{lem:entropy} to obtain  
\begin{equation}
    a_n \leq \frac{1}{n} \sqrt[n]{\prod_{\ell = 1}^n\frac{1}{\gamma_\ell^2}} \sqrt[n]{ \prod_{\ell = 1}^n \| \bar{g}_\ell\|^2 }
    \leq C^2 \sqrt[n]{\prod_{\ell = 1}^n \left( \frac{1}{\gamma_\ell^2} \right) } n^{-1 - \frac{2(k-m)+1}{d}}.
\end{equation}
This gives us the first estimate.

Now, if the energy functional is $\mu$-strongly convex, we have 
\begin{equation*}
E(u_n) - E(u) \geq \langle \nabla E(u), u_n - u \rangle + \frac{\mu}{2} \| u_n - u \|^2 = \frac{\mu}{2} \| u_n - u \|^2,
\end{equation*}
for any $n \geq 1$.
This gives us the estimate when the energy functional is $\mu$-strongly convex and $L$-smooth. 

Finally, we can easily obtain the last estimate by setting $\gamma_\ell = \gamma$ for all $\ell \leq n$. 
\end{proof}

\begin{remark}
    As a special case, we consider the WOGA for function approximation problems. 
    The corresponding energy functional $E(v) = \frac{1}{2}\|v - u\|^2$ is strongly convex with $\mu = 1$ and smooth with $L = 1$.
    Based on \cref{Thm:WOGA-convex}, we can easily obtain the optimal convergence rate for WOGA with weak parameter $\gamma_n = \gamma \in (0,1]$. 
    In fact, we can slightly refine the constant coefficient in the convergence rate for function approximation problems
    \begin{equation*}
    \left\| u - u_n \right\| \leq \frac{C}{\gamma} \| u \|_{\mathcal{K}_1(\mathbb{D})} n^{ - \frac{1}{2} - \frac{2(k-m) + 1}{2d}}.
    \end{equation*}
\end{remark}

\begin{remark}
    From \cref{Thm:WOGA-convex}, we see that if $\gamma_\ell = \gamma \in (0,1]$, then we preserve the same optimal convergence rate as the original orthogonal greedy algorithm for function approximation \cite{SX:2022a,LS:2024}. 
    Additional, more insight can be obtained from \cref{Thm:WOGA-convex} that even if $\{\gamma_\ell\}_{\ell = 1}^\infty$ is a decreasing sequence, we can still have convergence of the weak orthogonal greedy algorithm but with a lower convergence rate if $\{\gamma_\ell\}_{\ell = 1}^\infty$ is not decreasing too rapidly. 
    
    For instance, if we have $\gamma_\ell = \ell^{-\alpha}$, $\alpha > 0$, we obtain the following convergence rate from \cref{Thm:WOGA-convex}
    \begin{equation*}
        \|u - u_n \|_{H^m(\Omega)} \leq 2 C \sqrt{\frac{L}{\mu}} (n!)^{\frac{\alpha}{n}} \| u \|_{\mathcal{K}_1 (\mathbb{D})} n^{- \frac{1}{2} - \frac{2(k-m) + 1 }{2d}}.
    \end{equation*}
    By the Stirling's formula, we obtain 
\begin{equation}
     \|u -u_n \|_{H^m(\Omega)} \lesssim \| u \|_{\mathcal{K}_1 (\mathbb{D})}   n^{- \frac{1}{2} - \frac{2(k-m) +1}{2d} +\alpha }, \quad n \geq 1,\quad u \in \mathcal{K}_1 (\mathbb{D}). 
\end{equation}
It is then easy to see that if $\alpha < \frac{1}{2}+ \frac{2(k-m)+1}{2d}$, we still have a convergent algorithm.
\end{remark}

\section{Discretized dictionaries}
\label{sec:discrete}
In this section, we introduce the scheme of a deterministic discrete dictionary to approximately solve the $\operatorname{\arg\max}$ subproblem \eqref{argmax} in the greedy algorithms, and then propose its randomized variant.
 
\subsection{Deterministic discretization}
In~\cite[Section~6.2]{SHJHX:2023}, discretizing the dictionary $\mathbb{D}$ using a regular grid on polar and spherical coordinates for parametrization of the dictionary in two- and three-dimensions, respectively, was proposed.
Here, we present a generalization of the scheme for arbitrary dimensions.

From the definition~\eqref{dictionary} of $\mathbb{D}$, we parametrize the unit $d$-sphere $S^{d-1}$ in order to obtain a parametrization of $\mathbb{D}$.
In the cases $d = 2$ and $d = 3$, we may use the polar and spherical coordinates to parametrize $S^{d-1}$, respectively, as discussed in~\cite{SHJHX:2023}.
Accordingly, for higher dimensions, it is natural to make use of the hyperspherical coordinates~\cite{Blumenson:1960} to parametrize $S^{d-1}$.
Namely, for $\omega = (\omega_1, \dots, \omega_d) \in S^{d-1}$, we have $\omega = \mathcal{S} (\phi)$ for some $\phi = (\phi_1, \dots, \phi_{d-1}) \in [0, \pi]^{d-2} \times [0, 2\pi) \subset \mathbb{R}^{d-1}$, where the hyperspherical coordinate map $\mathcal{S} \colon [0, \pi]^{d-2} \times [0, 2\pi) \rightarrow S^{d-1}$ is defined as
\begin{equation}\label{eq:hyperspherical}
\begin{aligned}
(\mathcal{S}(\phi))_i &= \left( \prod_{j=1}^{i-1} \sin \phi_j \right) \cos \phi_i,
\quad 1 \leq i \leq d-1, \\
(\mathcal{S}(\phi))_d &= \prod_{j=1}^{d-1} \sin \phi_j,
\end{aligned}
\quad\quad \phi \in [0, \pi]^{d-2} \times [0, 2\pi).
\end{equation}
A useful property of the hyperspherical coordinate map $\mathcal{S}$ is that it is nonexpansive, as stated in \cref{lemma:nonexpansive}.

\begin{lemma}
\label{lemma:nonexpansive}
    The hyperspherical coordinate map $\mathcal{S} \colon [0, \pi]^{d-2} \times [0, 2\pi) \rightarrow S^{d-1}$ defined in~\eqref{eq:hyperspherical} is nonexpansive.
    That is, we have
    \begin{equation*}
    | \mathcal{S} (\phi) - \mathcal{S} (\hat{\phi}) |
    \leq | \phi - \hat{\phi} |,
    \quad \phi, \hat{\phi} \in [0, \pi]^{d-2} \times [0, 2\pi).
    \end{equation*}
\end{lemma}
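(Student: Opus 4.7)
The strategy is to reduce the claim to the pointwise Jacobian estimate $\|D\mathcal{S}(\phi)\|_{\mathrm{op}} \leq 1$ and then invoke the mean value inequality. Since the parameter domain $[0, \pi]^{d-2} \times [0, 2\pi)$ is convex as a subset of $\mathbb{R}^{d-1}$ and $\mathcal{S}$ extends smoothly to all of $\mathbb{R}^{d-1}$, for any $\phi, \hat{\phi}$ in the domain the line segment between them stays in the domain, and
\begin{equation*}
\mathcal{S}(\phi) - \mathcal{S}(\hat{\phi}) = \int_0^1 D\mathcal{S}\bigl(\hat{\phi} + t(\phi - \hat{\phi})\bigr)\,(\phi - \hat{\phi})\,dt,
\end{equation*}
so the operator-norm bound immediately delivers $|\mathcal{S}(\phi) - \mathcal{S}(\hat\phi)| \leq |\phi - \hat\phi|$.

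To establish $\|D\mathcal{S}(\phi)\|_{\mathrm{op}} \leq 1$, the key observation is the classical fact that hyperspherical coordinates are orthogonal: the columns $\partial\mathcal{S}/\partial\phi_k$ for $k = 1, \dots, d-1$ of $D\mathcal{S}(\phi)$ are pairwise orthogonal, and consequently the singular values of $D\mathcal{S}(\phi)$ coincide with the column norms. The cleanest way to verify this is by induction on $d$, exploiting the recursive identity
\begin{equation*}
\mathcal{S}(\phi_1,\phi_2,\ldots,\phi_{d-1}) = \bigl(\cos\phi_1,\ \sin\phi_1\cdot\widetilde{\mathcal{S}}(\phi_2,\ldots,\phi_{d-1})\bigr),
\end{equation*}
where $\widetilde{\mathcal{S}}$ is the analogous hyperspherical map onto $S^{d-2}$. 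Orthogonality between $\partial_{\phi_1}\mathcal{S}$ and $\partial_{\phi_k}\mathcal{S}$ for $k \geq 2$ reduces, after expanding, to $\widetilde{\mathcal{S}}\cdot\partial_{\phi_k}\widetilde{\mathcal{S}} = 0$, which follows from differentiating the identity $|\widetilde{\mathcal{S}}|^2 \equiv 1$. Orthogonality among $\partial_{\phi_k}\mathcal{S}$ for $k \geq 2$ is inherited, up to an overall factor $\sin^2\phi_1$, from the inductive hypothesis applied to $\widetilde{\mathcal{S}}$. The same recursion yields the column norms
\begin{equation*}
\bigl\|\partial\mathcal{S}/\partial\phi_k\bigr\|^2 = \prod_{j=1}^{k-1}\sin^2\phi_j \leq 1,
\end{equation*}
where the inequality uses $|\sin\phi_j| \leq 1$. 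Combining orthogonality with this norm bound gives $\|D\mathcal{S}(\phi)\|_{\mathrm{op}} \leq 1$, completing the argument.

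The only real obstacle is the combinatorial bookkeeping that a direct computation of the column inner products from the product formulas in~\eqref{eq:hyperspherical} would require; the inductive decomposition above sidesteps this by delegating both the orthogonality and the norm computation to the lower-dimensional map $\widetilde{\mathcal{S}}$ and to the unit-norm constraint on $S^{d-2}$. The base case $d = 2$ is immediate, since $\mathcal{S}(\phi) = (\cos\phi, \sin\phi)$ has a unit-speed derivative.
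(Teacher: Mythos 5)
Your proof is correct and takes a genuinely different route from the paper's. The paper establishes the exact finite-difference identity
\begin{equation*}
| \mathcal{S} (\phi) - \mathcal{S} (\hat{\phi}) |^2
= 4 \sum_{i=1}^{d-1} \left( \prod_{j=1}^{i-1} \sin \phi_j \sin \hat{\phi}_j \right) \sin^2 \frac{\phi_i - \hat{\phi}_i}{2},
\end{equation*}
and then concludes by bounding each summand term-by-term using $4\sin^2\frac{t}{2} \le t^2$ and $0 \le \sin\phi_j \le 1$ for $\phi_j \in [0,\pi]$. Your argument is instead infinitesimal: you bound the operator norm of the Jacobian $D\mathcal{S}$ pointwise by $1$ via the orthogonality of the columns (the standard fact that hyperspherical coordinates are an orthogonal curvilinear system, which you verify by induction through the recursion $\mathcal{S}(\phi_1,\ldots,\phi_{d-1}) = (\cos\phi_1,\ \sin\phi_1 \cdot \widetilde{\mathcal{S}}(\phi_2,\ldots,\phi_{d-1}))$), and then integrate along the line segment. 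Both approaches hide a comparable amount of computation: the paper delegates the trigonometric identity to the reader, while you carry out the inductive orthogonality bookkeeping explicitly. Your route is more structural and transfers readily to any orthogonal coordinate chart whose scale factors are bounded by $1$, whereas the paper's approach has the modest advantage of producing an exact formula for the squared distance rather than only an upper bound. One minor point of care in your argument: the bound $\|D\mathcal{S}\|_{\mathrm{op}} \le 1$ must hold along the entire segment joining $\phi$ and $\hat\phi$, not just at the endpoints; this is fine here because the column-norm expressions $\prod_{j<k}\sin^2\phi_j$ are bounded by $1$ on all of $\mathbb{R}^{d-1}$, not merely on the parameter box, so the smooth extension of $\mathcal{S}$ is $1$-Lipschitz globally.
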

\begin{proof}
We readily obtain the desired result by invoking the following identity:
\begin{equation*}
| \mathcal{S} (\phi) - \mathcal{S} (\hat{\phi}) |^2
= 4 \sum_{i=1}^{d-1} \left( \prod_{j=1}^{i-1} \sin \phi_j \sin \hat{\phi}_j \right) \sin^2 \frac{\phi_i - \hat{\phi}_i}{2} ,
\quad \phi, \hat{\phi} \in [0, \pi]^{d-2} \times [0, 2\pi),
\end{equation*}
which can be proven without major difficulty. 
\end{proof}

With the help of the hyperspherical coordinate map $\mathcal{S}$, the dictionary $\mathbb{D}$ can be parametrized as follows:
\begin{equation}
\label{parametrization}
\mathcal{P} \colon R \rightarrow \mathbb{D},
\quad
\mathcal{P}(\phi, b) = \sigma_k (\mathcal{S}(\phi) \cdot x + b). 
\end{equation}
The parameter space $R$ is defined as
\begin{equation}
\label{parameter_space}
R = \left( [0, \pi]^{d-2} \times [0, 2\pi) \right) \times [c_1, c_2].
\end{equation}
We denote $|R|$ as the volume of the parameter space $R$, given by $|R| = 2 \pi^{d-1} (c_2 - c_1)$.

Given a bounded domain $\Omega$, we can deduce the map $\mathcal{P}$ is Lipschitz continuous, i.e.,
\begin{equation}
\label{Lipschitz}
\| \mathcal{P}(\phi, b) - \mathcal{P}(\hat{\phi}, \hat{b}) \| \leq \operatorname{Lip}(\mathcal{P}) | (\phi, b) - (\hat{\phi}, \hat{b}) |,
\quad (\phi, b), (\hat{\phi}, \hat{b}) \in R,
\end{equation}
where the Lipschitz modulus $\operatorname{Lip} (\mathcal{P})$ depends on $d$, $k$, $\Omega$, $c_1$, and $c_2$. See \cref{rem:Lipschitz}.

\begin{remark}
\label{rem:Lipschitz}
To examine the dependence of the Lipschitz modulus $\operatorname{Lip} (\mathcal{P})$ on the problem setting, we conduct an analysis of $\operatorname{Lip} (\mathcal{P})$, assuming $\Omega = (0,1)^d \subset \mathbb{R}^d$.
We observe that setting $c_1 = -\sqrt{d}$ and $c_2 = \sqrt{d}$ in the definition~\eqref{dictionary} of $\mathbb{D}$ satisfies the condition~\eqref{c1c2}.
Namely, we have
\begin{equation*}
| \omega \cdot x | \leq | \omega | \, | x |
\leq \sqrt{1^2 + \dots + 1^2} = \sqrt{d}.
\end{equation*}
In this setting, we see that $\omega \cdot x + b \in [-2 \sqrt{d}, 2 \sqrt{d}]$ for any $(\omega, b) \in S^{d-1} \times [c_1, c_2]$ and $x \in \Omega$.
On the interval $[-2 \sqrt{d}, 2 \sqrt{d}]$, the activation function $\sigma_k$ is Lipschitz continuous with modulus
\begin{equation}
\label{Lip_sigma}
\max_{t \in [-2 \sqrt{d}, 2 \sqrt{d}]} | \sigma_k' (t) | = k(2 \sqrt{d})^{k-1}.
\end{equation}
Hence, for any $(\phi, b), (\hat{\phi}, \hat{b}) \in R$, we have
\begin{equation*}
\begin{split}
\| \mathcal{P}(\phi, b) - \mathcal{P}(\hat{\phi}, \hat{b}) \|^2
&= \int_{\Omega} \left| \sigma_k (\mathcal{S} (\phi \cdot x + b)) - \sigma_k (\mathcal{S} (\hat{\phi} \cdot x + \hat{b})) \right|^2 \,dx \\
&\stackrel{\eqref{Lip_sigma}}{\leq} k^2 (4d)^{k-1} \int_{\Omega} \left| ( \mathcal{S} (\phi) - \mathcal{S} (\hat{\phi}) ) \cdot x + (b - \hat{b}) \right|^2 \,dx \\
&\leq k^2 (4d)^{k-1} \left( ( \mathcal{S} (\phi) - \mathcal{S} (\hat{\phi}) )^2 + (b - \hat{b})^2 \right) \int_{\Omega} (|x|^2 + 1) \,dx \\
&\leq k^2 (4d)^{k-1} \left( \frac{d}{3} + 1 \right) | (\phi, b) - (\hat{\phi}, \hat{b}) |^2,
\end{split}
\end{equation*}
where the last inequality is due to \cref{lemma:nonexpansive}.
Therefore, we conclude that
\begin{equation*}
\operatorname{Lip}(\mathcal{P}) \leq k (2 \sqrt{d})^{k-1} \sqrt{\frac{d}{3} + 1}.
\end{equation*}
\end{remark}

To construct a deterministic discrete dictionary $\mathbb{D}_N$ of size $N$, we use a regular hyperrectangular grid on $R$, and then apply the map $\mathcal{P}$ given in~\eqref{parametrization}.
We have
\begin{equation}\label{eq:deterministic-dict}
\mathbb{D}_N = \left\{ \pm \sigma_k (\mathcal{S}(\phi) \cdot x + b) : (\phi, b) \in G_N \right\},
\end{equation}
where $G_N$ is defined as the set of centers of the cells in a regular hyperrectangular grid that divides $R$ into $N$ cells. 
Using elementary inequalities, it is straightforward to verify that the length $\ell$ of the main diagonal of each cell in the grid $G_N$ satisfies
\begin{equation*}
\ell \geq \sqrt{d} \left( \frac{|R|}{N} \right)^{\frac{1}{d}}.
\end{equation*}
We define a constant $\delta \geq 1$ as
\begin{equation}
\label{delta}
\delta = \frac{\ell}{\sqrt{d}} \left( \frac{N}{|R|} \right)^{\frac{1}{d}}.
\end{equation}

\subsection{Random discretization}
Now, we consider a random discretization of the dictionary $\mathbb{D}$.
One straightforward way of randomly discretizing the dictionary is to utilize the hyperspherical coordinate map \eqref{eq:hyperspherical}. we draw $N$ samples from the uniform distribution on the parameter space $R$, and then apply the parametrization map $\mathcal{P}$ given in~\eqref{parametrization}.
Therefore, a randomized discrete dictionary $\mathbb{D}_N$ is defined as
\begin{equation}
\label{eq:randomized-dict}
\mathbb{D}_N = \left\{\pm \sigma_k (\mathcal{S} (\phi_i) \cdot x + b_i) : (\phi_i, b_i) \sim \mathrm{Uniform}(R), \text{ } 1 \leq i \leq N \right\}. 
\end{equation}

\begin{remark}
\label{rem:uniform_sphere}
An alternative approach to defining a randomized discrete dictionary is to directly sample points from the uniform distribution on $S^{d-1} \times [c_1, c_2]$ directly, without relying on its parametrization.
Generating uniform random variables defined on a hypersphere can be achieved by appropriately manipulating normal random variables, see~\cite{Marsaglia:1972,Muller:1959}. 
However, generating deterministic points that are evenly spaced on  $S^{d-1}$ is less obvious in higher dimensions.
For the sake of simplicity and consistency in our analysis, we adhere to the method that utilizes the parametrization $\mathcal{P}$ for both deterministic and randomized discretizations.
\begin{remark}\label{Rem:optimalCovering}
It is natural to seek for a discrete dictionary of minimal size for a desired accuracy.
In our case, this is equivalent to finding an arrangement of points that forms a covering of the parameter space $R$ with congruent balls.
However, constructing an ideal arrangement of points is a highly nontrivial task in different dimensions, especially in higher ones \cite{Fejes:2022,FK:1993}. 
Covering density, roughly defined as the ratio of sum of the Lebesgue measures of the bodies and that of the domain being covered, measures the efficiency of a covering. 
On one hand, although the upper bound (\cite{rogers1957note, rogers1959lattice}) and the lower bound (\cite{coxeter1959covering}) for the covering density with congruent balls are available, the proofs are non-constructive, meaning that we do not know how to construct an arrangement of points that can achieve the lower bound in high dimensions.  
On the other hand, the optimal covering of the Euclidean space $\mathbb{R}^d$ with balls is only known for $d \leq 5$ \cite{kershner1939number, Bambah:1954, DR:1963, RB:1975, ryshkov1978}. 
Among the lattice arrangement covering, for which our deterministic discretization belongs to, the optimal covering is only known in 2D and is related to the regular hexagonal pattern \cite{kershner1939number}. 
The usage of the regular hyperrectangular grid introduced in \eqref{eq:deterministic-dict} is due to highly non-trivial explicit construction of efficient coverings in different dimensions.
\end{remark}

\end{remark}

\section{Analysis of dictionary sizes}
\label{sec:analysis}
In this section, we present a rigorous analysis demonstrating that utilizing discrete dictionaries for OGA, as introduced in the previous section, can yield convergence rates same as the original OGA, provided that the size of the discrete dictionary is sufficiently large. 
Throughout this section, we will focus on OGA for convex optimization, and we assume that the energy functional $E$ is $\mu$-strongly convex and $L$-smooth in $V = L^2(\Omega)$. 
The convergence analysis under other Hilbert spaces is similar. 

First, in \cref{alg:OGA-deterministic-dictionary}, we present OGA that uses a deterministic discrete dictionary $\mathbb{D}_N$ defined in~\eqref{eq:deterministic-dict}.

\begin{algorithm}[H]
\caption{OGA with a deterministic discrete dictionary}
\label{alg:OGA-deterministic-dictionary}
\begin{algorithmic}
    \State Given a deterministic discrete dictionary $\mathbb{D}_N$ of size $N$ as defined in~\eqref{eq:deterministic-dict}, and $u_0 = 0$,

    \For{$n = 1, 2, \dots$}
        \State $\displaystyle
        g_n = \operatornamewithlimits{\arg\max}_{g \in \mathbb{D}_N}  \langle g, \nabla E(u_{n-1}) \rangle $
        \State Find $u_n \in H_n :=\operatorname{span} \{ g_1, \dots, g_n \}$ such that
        \begin{equation*}
    u_n \in \operatornamewithlimits{\arg\min}_{v \in H_n} E(v).
    \end{equation*}
    \EndFor
\end{algorithmic}
\end{algorithm}

In \cref{thm:deterministic}, we state a lower bound for the size $N$ of the deterministic discrete dictionary $\mathbb{D}_N$ that ensures the optimal convergence rate of the error $\| u - u_n \|$. 

\begin{theorem}
\label{thm:deterministic}
Let $\gamma \in (0, 1)$. Assume that the energy functional $E$ is $\mu$-strongly convex and $L$-smooth.
In OGA with the deterministic discrete dictionary $\mathbb{D}_N$~(see \cref{alg:OGA-deterministic-dictionary}), for any $n \geq 1$, if
\begin{equation}
\label{thm1:deterministic}
N \geq |R| \left( \frac{\gamma}{1 - \gamma} \frac{\delta \sqrt{d} \operatorname{Lip} (\mathcal{P})}{4 C} \sqrt{\frac{L}{\mu}} \right)^d n^{ (\frac{1}{2} + \frac{2k+1}{2d})d} ,
\end{equation}
then we have
\begin{equation*}
\| u - u_n \| \leq \frac{2C}{\gamma} \sqrt{\frac{L}{\mu}} 
 \|  u \|_{\mathcal{K}_1 (\mathbb{D})} n^{-\frac{1}{2} - \frac{2k+1}{2d}},
\end{equation*}
where $\mathcal{K}_1 (\mathbb{D})$, $C$, $R$, $\operatorname{Lip} (\mathcal{P})$, and $\delta$ were given in~\eqref{variation_space},~\cref{Thm:WOGA-convex},~\eqref{parameter_space},~\eqref{Lipschitz}, and~\eqref{delta}, respectively.
\end{theorem}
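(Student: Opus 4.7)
The strategy is to show that \cref{alg:OGA-deterministic-dictionary} realizes a weak orthogonal greedy algorithm (WOGA) with parameter $\gamma$ up to iteration $n$, and then invoke \cref{thm:optimal-rate} to obtain the optimal rate. The twist is that the deterministic grid may fail to certify the weak greedy property at some iterations; the hypothesis~\eqref{thm1:deterministic} is calibrated so that whenever this happens the residual is already forced to be within the target tolerance, and monotonicity of the orthogonal projection then propagates this smallness to $u_n$.

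Concretely, I would first fix $m \leq n$, let $g_m^\star = \operatornamewithlimits{\arg\max}_{g \in \mathbb{D}} \langle g, u - u_{m-1}\rangle = \mathcal{P}(\phi^\star, b^\star)$, and observe that some grid center $(\hat\phi, \hat b) \in G_N$ lies within $\ell/2$ of $(\phi^\star, b^\star)$, where $\ell \leq \delta\sqrt{d}(|R|/N)^{1/d}$ is (up to the factor $\delta$) the main-diagonal length of a grid cell. The Lipschitz estimate~\eqref{Lipschitz} yields $\hat g_m := \mathcal{P}(\hat\phi, \hat b) \in \mathbb{D}_N$ with
\begin{equation*}
\|g_m^\star - \hat g_m\| \leq \frac{\operatorname{Lip}(\mathcal{P})\,\delta\sqrt{d}}{2}\left(\frac{|R|}{N}\right)^{\frac{1}{d}}.
\end{equation*}
Cauchy--Schwarz and the optimality of $g_m$ over $\mathbb{D}_N \ni \hat g_m$ give
\begin{equation*}
\langle g_m, u - u_{m-1}\rangle \geq \langle g_m^\star, u - u_{m-1}\rangle - \|g_m^\star - \hat g_m\|\,\|u - u_{m-1}\|.
\end{equation*}
Since $u_{m-1} = P_{m-1}u$, the identity $\|u - u_{m-1}\|^2 = \langle u - u_{m-1}, u\rangle$ combined with \cref{Lem:inner_product} supplies the key lower bound $\langle g_m^\star, u - u_{m-1}\rangle \geq \|u - u_{m-1}\|^2 / \|u\|_{\mathcal{K}_1(\mathbb{D})}$.

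These pieces produce a dichotomy at each step $m$. If $\|u - u_{m-1}\| \geq \|u\|_{\mathcal{K}_1(\mathbb{D})}\,\|g_m^\star - \hat g_m\|/(1-\gamma)$, the two estimates combine to give $\langle g_m, u - u_{m-1}\rangle \geq \gamma \max_{g\in\mathbb{D}}\langle g, u - u_{m-1}\rangle$, so step $m$ is a genuine WOGA step with parameter $\gamma$. Otherwise, substituting the discretization bound together with the hypothesis~\eqref{thm1:deterministic} collapses the reverse inequality to exactly
\begin{equation*}
\|u - u_{m-1}\| < \frac{C}{\gamma}\,\|u\|_{\mathcal{K}_1(\mathbb{D})}\, n^{-\frac{1}{2} - \frac{2k+1}{2d}},
\end{equation*}
which is precisely the target bound.

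To finish, I would split into two cases. If every step $m = 1, \ldots, n$ lies in the first alternative, the iteration \emph{is} a WOGA with parameter $\gamma$ and \cref{thm:optimal-rate} delivers the desired estimate directly. If some smallest $m^\star \leq n$ falls in the second alternative, the bound on $\|u - u_{m^\star - 1}\|$ transfers via $\|u - u_n\| = \|u - P_n u\| \leq \|u - P_{m^\star - 1}u\| = \|u - u_{m^\star - 1}\|$, using that $H_{m^\star - 1} \subseteq H_n$. The pivotal obstacle is calibrating the dichotomy threshold so that the "bad" alternative already matches the target rate; the specific constant $\bigl(\gamma/(1-\gamma)\bigr)^d\bigl(\delta\sqrt{d}\operatorname{Lip}(\mathcal{P})/(2C)\bigr)^d$ appearing in~\eqref{thm1:deterministic} is chosen exactly so that these two sides coincide, and the rest of the argument then reduces to unwinding definitions and applying \cref{thm:optimal-rate} and \cref{Lem:inner_product}.
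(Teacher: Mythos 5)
Your proof is correct and matches the paper's argument essentially step for step. The paper reduces to the same dichotomy you make explicit, but via a terse ``we may assume'' that uses the monotonicity of $\{\|u-u_m\|\}$: it supposes $\|u-u_m\| > \frac{C}{\gamma}\|u\|_{\mathcal{K}_1(\mathbb{D})} n^{-1/2 - (2k+1)/(2d)}$ for all $0\le m\le n-1$ (otherwise the decreasing-residual argument finishes immediately), and then shows under this assumption and the hypothesis on $N$ that every step satisfies the weak greedy condition with parameter $\gamma$, at which point \cref{thm:optimal-rate} closes. Your key estimates --- the grid-cell diameter, the Lipschitz transfer, Cauchy--Schwarz against the argmax over $\mathbb{D}_N$, and the lower bound $\langle g_m^\star, u-u_{m-1}\rangle \ge \|u-u_{m-1}\|^2/\|u\|_{\mathcal{K}_1(\mathbb{D})}$ from \cref{Lem:inner_product} --- are exactly the paper's chain~\eqref{thm5:deterministic}, and your case distinction is precisely the calibration underlying the paper's WLOG. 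One trivial slip: with the paper's definition~\eqref{delta}, $\ell = \delta\sqrt{d}(|R|/N)^{1/d}$ exactly, not merely $\le$; this does not affect anything.
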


\begin{proof}
    As the sequence $\{E(u_m) -E(u) \}$ is decreasing in \cref{alg:OGA-deterministic-dictionary}, we may assume the following: 
    \begin{equation}\label{eq:E_lower}
    E(u_m) - E(u) \geq \frac{2C^2 L}{\gamma^2} \|u\|^2_{\mathcal{K}_1 (\mathbb{D})} n^{-1 - \frac{2k+1}{d}}, \quad 0\leq m\leq n-1.
    \end{equation}
Now we suppose the dictionary size $N$ is given by \eqref{thm1:deterministic}.
Let $(\phi_m^*, b_m^*)$ be an element of $R$ such that $g = \mathcal{P}(\phi_m^*, b_m^*)$ maximizes $| \langle g, \nabla E(u_{m-1}) \rangle |$ in $\mathbb{D}$.
Recall that the length of the main diagonal of each cell in the grid $G_N$ is denoted by $\ell$.
Then we can find a grid point $(\phi_m, b_m) \in G_N$ such that
\begin{equation}
\label{thm3:deterministic}
| (\phi_m, b_m) - (\phi_m^*, b_m^*) |
\leq \frac{\ell}{2} \leq \frac{1-\gamma}{\gamma} \frac{2 C}{\operatorname{Lip}(\mathcal{P})} \sqrt{\frac{\mu}{L}} n^{-\frac{1}{2} - \frac{2k+1}{2d}}.
\end{equation}

It follows that
\begin{equation}
\label{thm5:deterministic}
\begin{split}
| \langle \mathcal{P} (\phi_m^*, b_m^*), & \nabla E(u_{m-1})\rangle | -
 | \langle \mathcal{P} (\phi_m, b_m), \nabla E(u_{m-1}) \rangle | \\ 
&\leq \left| \langle \mathcal{P} (\phi_m, b_m) - \mathcal{P} (\phi_m^*, b_m^*), \nabla E(u_{m-1}) \rangle \right| \\
&\stackrel{\eqref{Lipschitz}}{\leq} \operatorname{Lip} (\mathcal{P}) | (\phi_m, b_m) - (\phi_m^*, b_m^*) | \, \| \nabla E(u_{m-1}) \| \\
&\stackrel{(*)}{\leq} \operatorname{Lip} (\mathcal{P}) | (\phi_m, b_m) - (\phi_m^*, b_m^*)| \frac{ L \| u \|_{\mathcal{K}_1 (\mathbb{D})} | \langle \mathcal{P} (\phi_m^*, b_m^*), \nabla E(u_{m-1}) \rangle |}{\| \nabla E(u_{m-1}) \|} \\
& \stackrel{(**)}{\leq}    \frac{ L \| u \|_{\mathcal{K}_1 (\mathbb{D})} \operatorname{Lip} (\mathcal{P}) | (\phi_m, b_m) - (\phi_m^*, b_m^*)|}{\sqrt{2 \mu (E(u_{m-1}) - E(u))}} | \langle \mathcal{P} (\phi_m^*, b_m^*), \nabla E(u_{m-1}) \rangle | \\ 
&\stackrel{\eqref{eq:E_lower}}{\leq} \left( 1 - \gamma \right) | \langle \mathcal{P} (\phi_m^*, b_m^*), \nabla E(u_{m-1}) \rangle |,
\end{split}
\end{equation}
where $(*)$ is due to the following:
\begin{equation*}
\frac{1}{L} \|\nabla E(u_{m-1}) \|^2 \leq |\langle \nabla E(u_{m-1}) -\nabla E(u) , u_{m-1} -u \rangle | =  |\langle \nabla E(u_{m-1}) , u \rangle | \leq  \|u\|_{\mathcal{K}_1(\mathbb{D})} \max_{g \in \mathbb{D}} \langle g, \nabla E(u_{m-1}) \rangle ,  
\end{equation*}
and $(**)$ is due to
\begin{equation*}
    E(u_{m-1}) - E(u) \leq \frac{1}{2\mu} \|\nabla E(u_{m-1}) \|^2. 
\end{equation*}

Rearranging~\eqref{thm5:deterministic} , we obtain
\begin{equation}
\label{thm6:deterministic}
| \langle \mathcal{P} (\phi_m, b_m), \nabla E(u_{m-1}) \rangle |
\geq \gamma | \langle \mathcal{P} (\phi_m^*, b_m^*),  \nabla E(u_{m-1}) \rangle |,
\end{equation}
which implies that the initial $n$ iterations of \cref{alg:OGA-deterministic-dictionary} realize WOGA~(see \cref{alg:WOGA-convex}).
Therefore, invoking \cref{Thm:WOGA-convex} yields the desired result.
\end{proof}

Next, the approach that uses a randomized discrete dictionary~\eqref{eq:randomized-dict} at each iteration of OGA is given in \cref{alg:OGA-randomized-dictionary}. 
This approach is also used in~\cite{CDDN:2020} for greedy algorithms in reduced basis methods. 
In practice, one can also further optimize the $\arg\max$ subproblem using a few steps of a damped Newton's method on a selected subset of elements from the randomized dictionary.
As demonstrated in \ref{sec:local-optimization} (where the local optimization algorithm is presented), this additional local optimization could further improve the accuracy. 
We will also see from experiments in \cref{sec:experiments} that using randomized dictionaries is also very effective.
\begin{algorithm}
\caption{OGA with randomized discrete dictionaries}
\label{alg:OGA-randomized-dictionary}
\begin{algorithmic}
    \State Given $u_0 = 0$,

    \For{$n = 1, 2, \dots$}
        \State Sample $(\phi_{i,n}, b_{i,n}) \sim \mathrm{Uniform}(R)$ for $i = 1, ..., N$ to generate a randomized discrete dictionary
        \begin{equation*}
            \mathbb{D}_{N, n} = \left\{ \pm \sigma_k ( \mathcal{S} (\phi_{i, n}) \cdot x + b_{i, n} ) : 1 \leq i \leq N \right\}.
        \end{equation*}
        \State $\displaystyle
        g_n = \operatornamewithlimits{\arg\max}_{g \in \mathbb{D}_{N,n}} \langle g, \nabla E(u_{n-1}) \rangle $
        \State Find $u_n \in H_n :=\operatorname{span} \{ g_1, \dots, g_n \}$ such that
        \begin{equation*}
    u_n \in \operatornamewithlimits{\arg\min}_{v \in H_n} E(v).
    \end{equation*}
    \EndFor
\end{algorithmic}
\end{algorithm}

\Cref{thm:randomized} provides a lower bound for the size of the randomized discrete dictionaries $\{ \mathbb{D}_{N,n} \}$ that ensures the optimal convergence rate of the residual error $\| u - u_n \|$ presented in~\cref{Thm:WOGA-convex}. 

\begin{lemma}\label{lem:probability}
    Given a ball of radius $\epsilon$ contained in the parameter space $R$ as defined in \eqref{parameter_space} , if we sample $N$ points uniformly on $R$, the probability of having at least one point in the ball is given by $1 - (1-p)^N$, 
    where $$p = \frac{V_d \epsilon^d}{|R|},$$
    and $V_d = \frac{\pi^{\frac{d}{2}}}{\Gamma(\frac{d}{2} + 1)}$ is the volume of an $\ell^2$-unit ball in $\mathbb{R}^d$.
\end{lemma}
\begin{theorem}
\label{thm:randomized}
Let $\gamma, \eta \in (0, 1)$.
In OGA with the randomized discrete dictionaries $\{ \mathbb{D}_{N, n} \}_n$~(see \cref{alg:OGA-randomized-dictionary}), for any $n \geq 1$, if
\begin{equation}
\label{thm1:randomized}
N \geq  \left( |R| \cdot \frac{1}{V_d} \left( \frac{\gamma}{1-\gamma} \frac{\operatorname{Lip} (\mathcal{P})}{2 C} \sqrt{\frac{L}{\mu}}\right)^d n^{(\frac{1}{2} + \frac{2k+1}{2d})d} -\frac{1}{2} \right)  \log \frac{n}{\eta} ,
\end{equation}
then, with probability at least $1 - \eta$, we have
\begin{equation*}
\| u - u_n \| \leq \frac{2C}{\gamma} \sqrt{\frac{L}{\mu}} \| u \|_{\mathcal{K}_1 (\mathbb{D})} n^{-\frac{1}{2} - \frac{2k+1}{2d}},
\end{equation*}
where $\mathcal{K}_1 (\mathbb{D})$, $C$, $R$, and $\operatorname{Lip} (\mathcal{P})$ were given in~\eqref{variation_space},~\cref{Thm:WOGA-convex},~\eqref{parameter_space}, and~\eqref{Lipschitz}, respectively.
\end{theorem}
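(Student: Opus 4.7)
The plan is to mirror the structure of the deterministic proof in \cref{thm:deterministic}, while replacing the deterministic covering argument with a probabilistic one. As in that proof, I would assume by contradiction (or by monotonicity of $\|u-u_m\|$) that
\begin{equation*}
\|u - u_m\| > \frac{C}{\gamma}\|u\|_{\mathcal{K}_1(\mathbb{D})} n^{-\frac{1}{2}-\frac{2k+1}{2d}},\quad 0 \leq m \leq n-1,
\end{equation*}
and then show that, with probability at least $1-\eta$, every one of the first $n$ iterations of \cref{alg:OGA-randomized-dictionary} constitutes a valid WOGA step with parameter $\gamma$. Once this is established, \cref{thm:optimal-rate} immediately yields the claimed error bound.

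The deterministic argument shows that a step at iteration $m$ is a valid WOGA step provided one can find an element $(\phi_m,b_m)$ in the randomized sample lying within distance
\begin{equation*}
r := \frac{1-\gamma}{\gamma}\,\frac{C}{\operatorname{Lip}(\mathcal{P})}\, n^{-\frac{1}{2}-\frac{2k+1}{2d}}
\end{equation*}
of an exact maximizer $(\phi_m^*,b_m^*) \in R$ of $|\langle \mathcal{P}(\cdot),u-u_{m-1}\rangle|$. (Existence of $(\phi_m^*,b_m^*)$ follows from compactness of $R$ and continuity of $\mathcal{P}$.) Indeed, reproducing the chain in~\eqref{thm5:deterministic} verbatim with $(\phi_m,b_m)$ replaced by the nearby random sample yields~\eqref{thm6:deterministic}. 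So the whole task reduces to bounding, with high probability, the event that at every iteration $m\in\{1,\dots,n\}$ some sample of $\mathbb{D}_{N,m}$ falls inside the Euclidean ball $B_r(\phi_m^*,b_m^*)\subset\mathbb{R}^{d}$.

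For the probabilistic step, let $\mathcal{F}_{m-1}$ denote the $\sigma$-algebra generated by the samples used in iterations $1,\dots,m-1$. Then $u_{m-1}$, and hence $(\phi_m^*,b_m^*)$, is $\mathcal{F}_{m-1}$-measurable, while the $N$ samples $(\phi_{i,m},b_{i,m})$ drawn at step $m$ are i.i.d.\ $\mathrm{Uniform}(R)$ and independent of $\mathcal{F}_{m-1}$. Conditional on $\mathcal{F}_{m-1}$, the probability that a single sample lies in $B_r(\phi_m^*,b_m^*)\cap R$ is at least $V_d r^d/|R|$ (absorbing a dimension-dependent constant from potential boundary truncation of the ball into $C$ if necessary; this is harmless because the hypothesized lower bound on $N$ is strict in the lead constants). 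Hence
\begin{equation*}
\mathbb{P}\!\left(\text{no sample in }B_r\text{ at step }m \,\middle|\, \mathcal{F}_{m-1}\right) \leq \left(1-\tfrac{V_d r^d}{|R|}\right)^{N} \leq \exp\!\left(-\tfrac{N V_d r^d}{|R|}\right).
\end{equation*}
Substituting $r$ into the hypothesis~\eqref{thm1:randomized} gives $NV_d r^d/|R|\geq \log(n/\eta)$, so the above conditional probability is at most $\eta/n$. Taking expectations and applying a union bound over $m=1,\dots,n$ shows that with probability at least $1-\eta$, every step is a valid $\gamma$-WOGA step, at which point \cref{thm:optimal-rate} closes the argument.

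The main subtlety, and the only step I expect to require care, is the conditioning structure in the last paragraph: one must explicitly invoke the independence of step-$m$ samples from all earlier randomness to reduce the analysis to a fixed (though random) target $(\phi_m^*,b_m^*)$. A minor secondary issue is the boundary of $R$ (the volume $\mathrm{vol}(B_r(\phi_m^*,b_m^*)\cap R)$ could be smaller than $V_d r^d$ when the maximizer is near $\partial R$), but this only affects dimension-dependent constants and can be absorbed into the bounds on $\operatorname{Lip}(\mathcal{P})$ and $C$; asymptotically as $n\to\infty$ the radius $r$ is arbitrarily small, so for large $n$ the ball eventually fits inside $R$ and no correction is needed.
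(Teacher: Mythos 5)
Your proof is correct and follows essentially the same overall structure as the paper's: assume the residual has not yet reached the target bound, reduce validity of a WOGA step at iteration $m$ to the existence of a sample within distance $\epsilon$ of the maximizer $(\phi_m^*,b_m^*)$ (the condition the paper labels $(*)$), and then bound the probability of this event holding for all $n$ steps. The paper then proceeds slightly differently in the probabilistic bookkeeping: rather than a union bound on failure events, it asserts directly that the success probability equals $\bigl(1-(1-p)^N\bigr)^n$ with $p = V_d\epsilon^d/|R|$, and uses the elementary inequality $\log(1+1/t) \geq 2/(2t+1)$ to pass from hypothesis~\eqref{thm1:randomized} to $(1-p)^N \leq \eta/n$, whereas you use $(1-p)^N \leq e^{-pN}$. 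Both routes give the same threshold on $N$. Your version is actually the more careful of the two: the paper's product formula implicitly requires the tower-property argument over the filtration $\{\mathcal{F}_{m-1}\}$ that you spell out explicitly (the targets $(\phi_m^*,b_m^*)$ are random and $\mathcal{F}_{m-1}$-measurable, so naive independence across steps does not hold), and it ignores the boundary-of-$R$ truncation issue entirely, whereas you flag it. That boundary issue is not fully dispatched by either argument — a ball centered near a corner of $R$ loses a constant factor up to $2^{-d}$ in volume, which cannot simply be absorbed into $\operatorname{Lip}(\mathcal{P})$ or $C$ inside the $d$-th power without inflating the stated bound — but since the paper passes over it silently, your treatment is at least as rigorous on this point.
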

\begin{proof}
Similar to in \cref{thm:deterministic}, we may assume that~\eqref{eq:E_lower} holds.
For each $m$ with $1 \leq m \leq n-1$, let $(\phi_m^*, b_m^*)$ be an element of $R$ such that $g = \mathcal{P} (\phi_m^*, b_m^*)$ maximizes $| \langle g, \nabla E(u_{m-1}) \rangle |$ in $\mathbb{D}$.
By the same argument as in~\eqref{thm5:deterministic}, we can prove that for each $(\phi_m, b_m) \in R$,~\eqref{thm3:deterministic} implies~\eqref{thm6:deterministic}.
Accordingly, we conclude the following:
\begin{quote}
$(*)$ \quad If there exists an element $(\phi_m, b_m) \in \mathbb{D}_{N, m}$  such that $$|(\phi_m, b_m) - (\phi_m^*, b_m^*) | \leq \frac{1 - \gamma}{\gamma} \frac{2 C}{\operatorname{Lip} (\mathcal{P})} \sqrt{\frac{\mu}{L}} n^{-\frac{1}{2} - \frac{2k+1}{2d}} =: \epsilon$$ for each $0 \leq m \leq n-1$, then the initial $n$ iterations of \cref{alg:OGA-randomized-dictionary} realize WOGA with a weak parameter $\gamma$~(see \cref{alg:WOGA-convex}).
\end{quote}
Therefore, if the condition~$(*)$ holds, invoking \cref{Thm:WOGA-convex} yields the desired result.

Next, we show that~\eqref{thm1:randomized} implies that~$(*)$ holds with probability $1 - \eta$.
By \cref{lem:probability}, the probability that the premise of~$(*)$ holds is given by
\begin{equation*}
\left( 1 - (1- p)^N \right)^m,
\end{equation*}
where $p$ is the ratio between the volume of a $d$-dimensional ball with radius $\epsilon$ and that of the entire parameter space $R$, i.e.,
\begin{equation*}
p = \frac{V_d \epsilon^d}{|R|}.
\end{equation*}
Now, we suppose that $N$ satisfies~\eqref{thm1:randomized}.
Then we have
\begin{equation}
\label{thm2:randomized}
N 
\geq \left( \frac{1}{p} - \frac{1}{2} \right) \log \frac{n}{\eta}
\geq \frac{\log \frac{n}{\eta}}{\log \frac{1}{1-p}},
\end{equation}
where the second inequality is due to an elementary inequality~(see, e.g.,~\cite[equation~(3.5)]{Park:2022})
\begin{equation*}
\log \left( 1 + \frac{1}{t} \right) \geq \frac{2}{2t+1},
\quad t > 0.
\end{equation*}
By direct calculation, we verify that~\eqref{thm2:randomized} implies
\begin{equation*}
\left( 1 - (1-p)^N \right)^m \geq  1 - m(1-p)^N \geq 1 - n(1-p)^N = 1 - e^{\ln(n) + N \ln(1-p)}\geq 1 - \eta,
\end{equation*}
which completes the proof.
\end{proof}

\begin{remark}\label{Rem:other-activation}
Although \cref{alg:OGA-randomized-dictionary} is tailored for training $\operatorname{ReLU}^k$ shallow neural networks, the strategy of employing randomized dictionaries also works for other activation functions with non-compact parameter spaces, e.g., tanh, sigmoid, etc. In these situations, we can still randomly sample the parameters to form a randomized discrete dictionary, e.g., using a Gaussian distribution or a uniform distribution on a truncated parameter space, see \cite[Example 6]{SHJHX:2023}. 
\end{remark}

\section{Numerical results}
\label{sec:experiments}
In this section, we present numerical results for $L^2$-minimization problems and the Neumann problem in various dimensions to demonstrate the effectiveness of our method of using randomized dictionaries. 
In addition, we solve a Poisson--Boltzmann equation using OGA with randomized dictionaries, illustrating its good convergence properties for nonlinear PDEs.
Furthermore, we also address an indefinite elliptic PDE to show that OGA achieves good convergence for an indefinite problem even in the absence of a theoretical guarantee at this stage.
We also explore the application of OGA with a $\tanh$ activation function to this problem, as discussed in \cref{Rem:other-activation}, and achieve promising accuracy. 
Finally, we modify OGA to propose an iterative version, demonstrating its potential with an $L^2$-minimization example.

Numerical integration is needed for evaluating inner product in the $\operatorname{\arg\max}$ subproblem and for assembling linear systems in the orthogonal projection subproblem. 
In 1D, 2D and 3D, we use the Gauss-Legendre quadrature rule.
For higher dimensions, we use quasi-Monte Carlo integration~\cite{Caflisch:1998,MC:1995}. 
We do not investigate the effect of the number of sampling points on the stability of the orthogonal projection. 
See \cite{CDL:2013, MNVT:2014, CCMNT:2015} for relevant studies on least squares approximations using polynomials. 
The number of sampling points, chosen to ensure sufficient accuracy, is specified in each experiment.

All experiments are implemented using Python and PyTorch, and run on a NVIDIA A100 GPU.

\subsection{$L^2$-minimization problems}
Through solving the following $L^2$-minimization problems in different dimensions using two different discretization schemes for the dictionary, we demonstrate the effectiveness of our proposed method of using randomized OGA.

In 2D, we fit a Gabor function (see Figure \ref{fig:2dgabor}) given by
\begin{equation}\label{eq:2d-gabor}
    u(x) = \exp \left( -\frac{(x_1 - 0.5)^2 + (x_2 - 0.5)^2}{2 \sigma^2} \right) \cos(2\pi m x_1 ), \quad \sigma = 0.15, m = 8.
\end{equation}
For numerical integration, we divide the square domain $[0,1]^2$ into $50^2$ uniform square subdomains and use a Gaussian quadrature of order 3 in each subdomain. 
\begin{figure}
    \centering
    \includegraphics[width=0.4\textwidth]{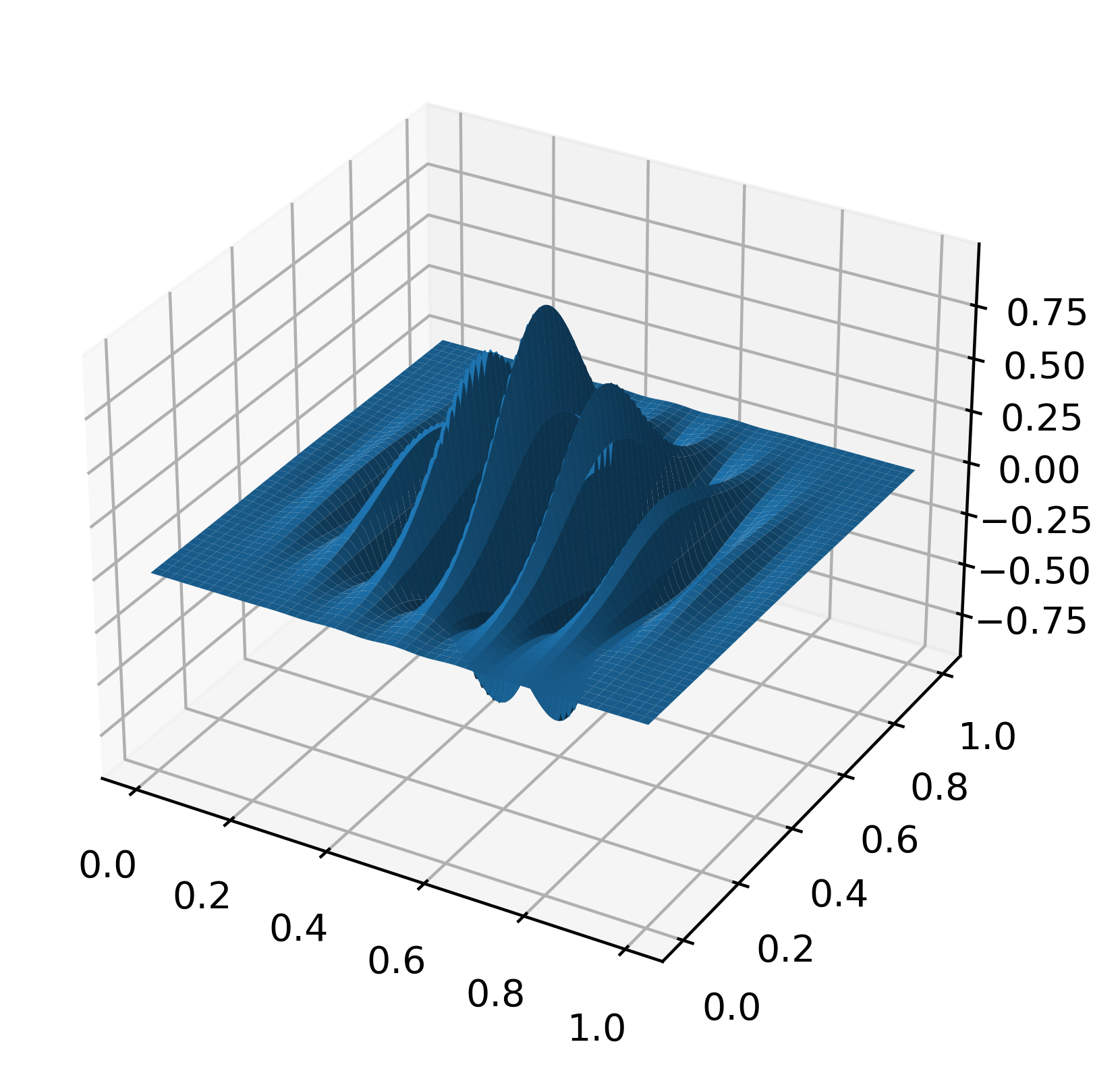}
    \caption{2D Gabor function \eqref{eq:2d-gabor}}
    \label{fig:2dgabor}
\end{figure}

In 3D, we fit the following function
\begin{equation}\label{eq:3d-sines}
u(x) = \sin(\pi x_1)\sin(\pi x_2) \sin(\pi x_3).
\end{equation}
For numerical integration, we divide the cubic domain $[0,1]^3$ into $25^3$ uniform cubic subdomains and use a Gaussian quadrature of order 3 in each subdomain. 

In 4D, we fit the following function
\begin{equation}\label{eq:4dsines}
    u(x) = \prod_{i =1}^4 \sin(\pi x_i),
\end{equation}
For the numerical integration, we use Quasi-Monte Carlo method~\cite{Caflisch:1998,MC:1995} with $5\times 10^5$ integration points generated by the Sobol sequence. 
For this task, we test shallow neural networks with $\operatorname{ReLU}^1$ and $\operatorname{ReLU}^4$ activation, respectively.

In 10D, we fit the Gaussian function  
\begin{equation}\label{eq:10gaussian}
    u(x) = \exp \left( -  \sum_{ i= 1}^d c(x_i - \omega)^2 \right), ~c = \frac{7.03}{d}, ~d = 10, ~\omega = 0.5. 
\end{equation}
For the numerical integration, we use Quasi Monte-Carlo method with $10^6$ integration points generated by the Sobol sequence. 

Results for 2D and 3D are shown in \cref{fig:123d-ex1-relu1}. 
The $L^2$ errors are plotted against the number of iterations, with the optimal convergence rate in \cref{Thm:WOGA-convex} plotted as a dotted reference line. 
For the 2D example, from \cref{fig:123d-ex1-relu1} (a), we see that using randomized dictionaries gives more accurate numerical results and achieves the optimal convergence order with a smaller discrete dictionary. 
On the other hand, when using a deterministic discrete dictionary, the accuracy deteriorates significantly as the number of iterations increases, failing to achieve an optimal convergence if the size of the deterministic dictionary is not larger than that of a randomized one. 
For the 3D example, from \cref{fig:123d-ex1-relu1} (b), we observe a much more significant improvement after using randomized dictionaries. 
We see that using randomized dictionaries of size only $2^{6}$ outperforms using a deterministic dictionary of size $2^{14}$ which is more than 200 times larger.  
The results for the 4D example are plotted in \cref{fig:4d-ex1-relu14}.
In both cases, we can see that using a randomized dictionary of a much smaller size leads to better accuracy than using a deterministic dictionary of a larger size. 
The reduction in the number of dictionary elements is of several orders of magnitude. 
Furthermore, the optimal convergence order is observed for using randomized dictionaries. 
The numerical results for the 10D example are plotted in Figure \ref{fig:10d-ex1}. 
We observe that using randomized dictionaries, we can still obtain very accurate numerical approximations that achieve an optimal convergence order in such a high-dimensional problem with a moderate dictionary size of around $2^9$. 
\begin{figure}[H] 
    \centering
    \subfloat[][]{\includegraphics[width=0.45\textwidth]{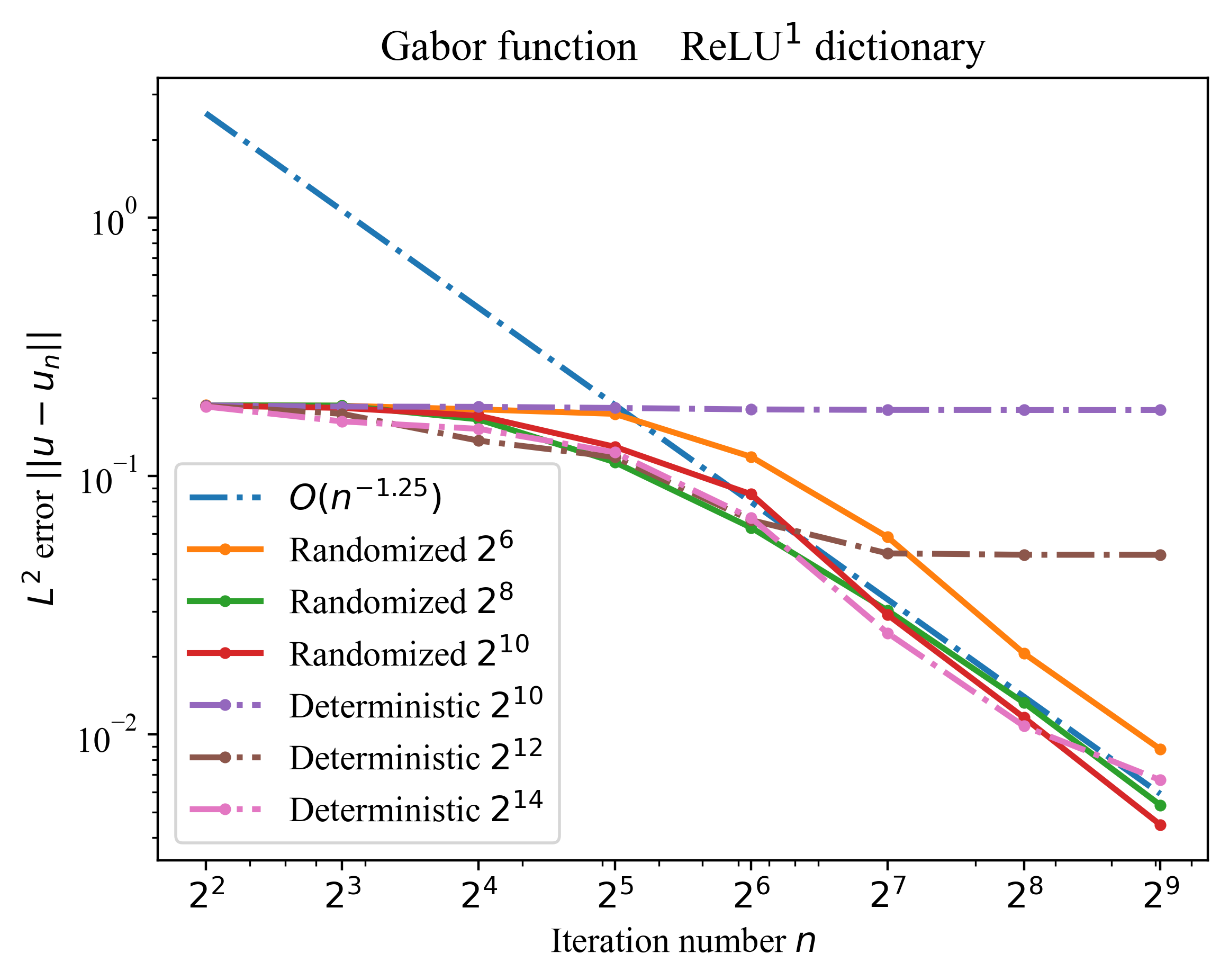}} 
    \subfloat[][]{\includegraphics[width=0.45\textwidth]{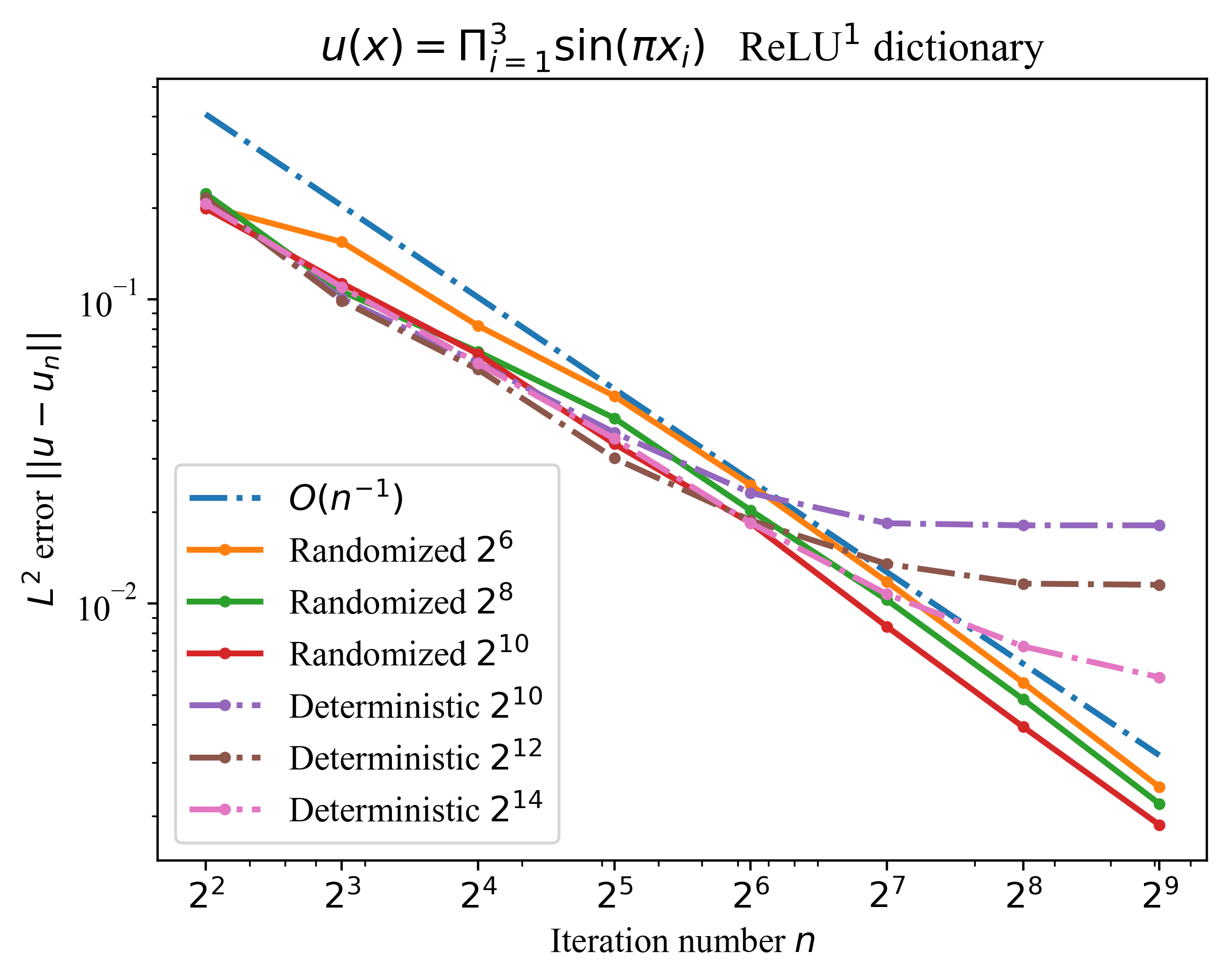}}
    \caption{Comparison between using 
 $\operatorname{ReLU}^1$ randomized dictionaries and $\operatorname{ReLU}^1$ deterministic dictionaries of various sizes. \textbf{(a)}: 2D $L^2$-minimization problem \eqref{eq:2d-gabor}. \textbf{(b)}: 3D $L^2$-minimization problem \eqref{eq:3d-sines}. }
    \label{fig:123d-ex1-relu1}
\end{figure}

\begin{figure}[H]
    \centering
    \subfloat[][]{\includegraphics[width=0.45\textwidth]{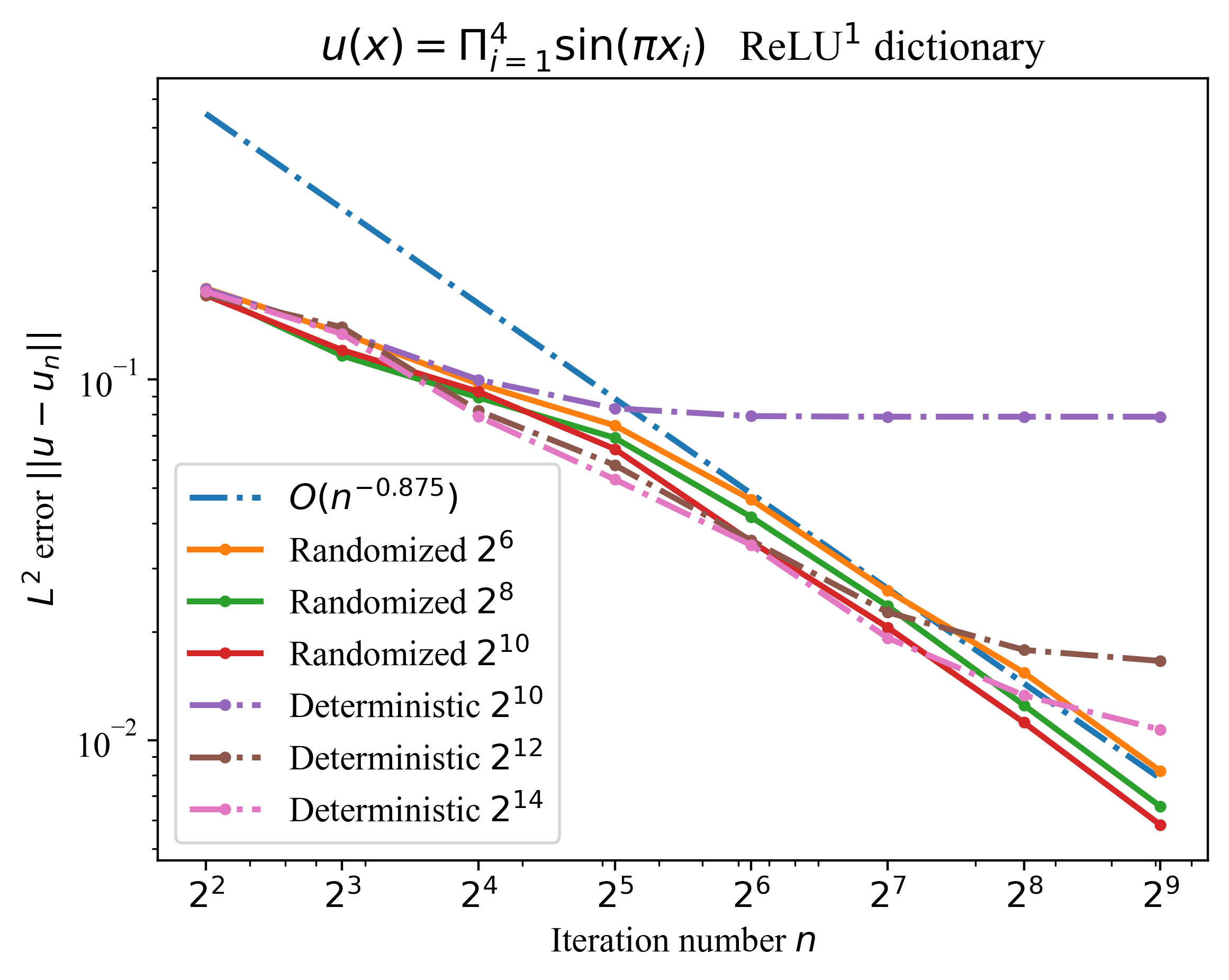}}
    \subfloat[][]{\includegraphics[width=0.45\textwidth]{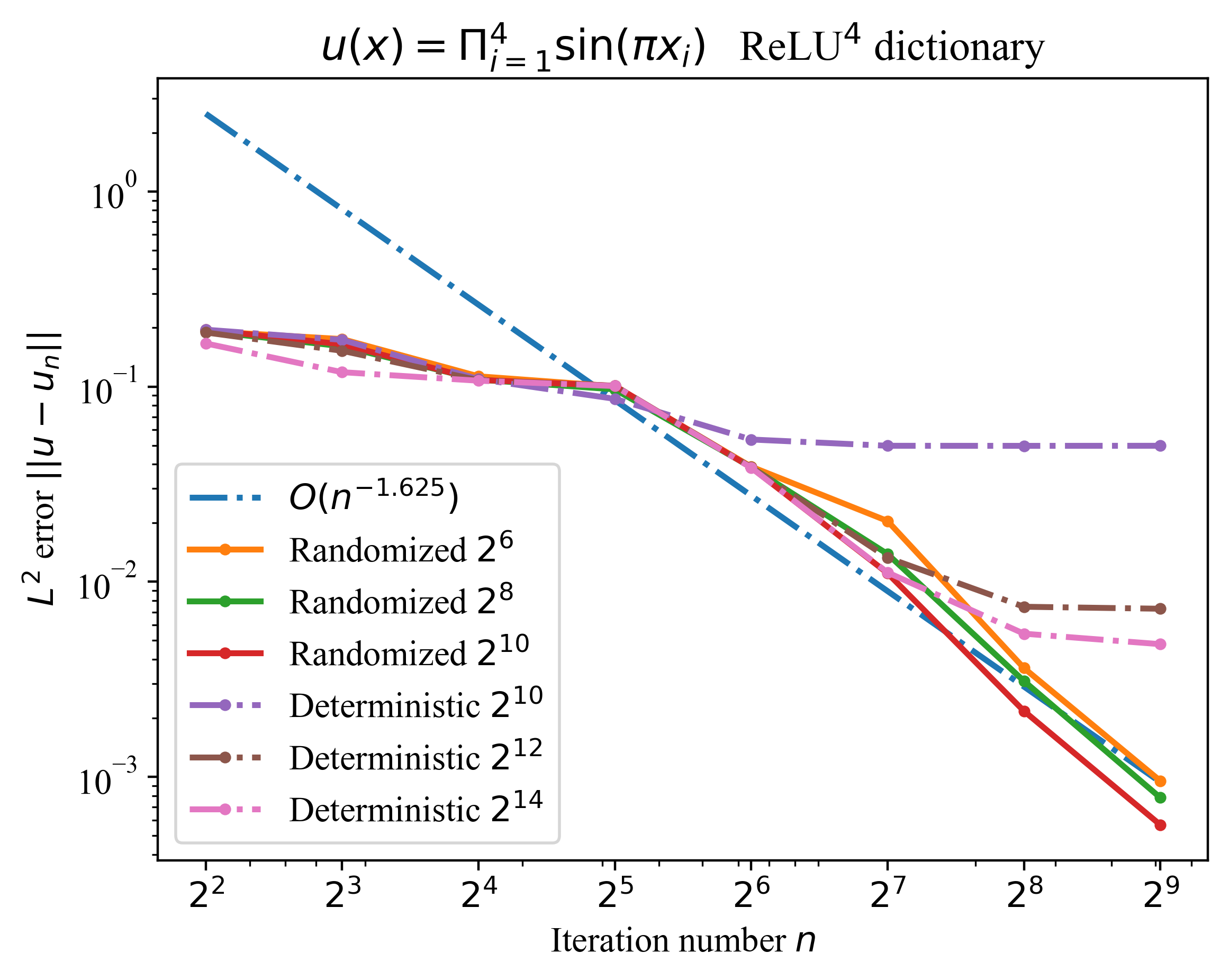}}
    \caption{Comparison between using randomized dictionaries and deterministic dictionaries of various sizes. 4D $L^2$-minimization problem \eqref{eq:4dsines}. \textbf{(a)}: $\operatorname{ReLU}^1$ activation. \textbf{(b)}: $\operatorname{ReLU}^4$ activation.}
    \label{fig:4d-ex1-relu14}
\end{figure}


\begin{figure}[H]
    \centering
     \subfloat[][]{\includegraphics[width=0.45\textwidth]{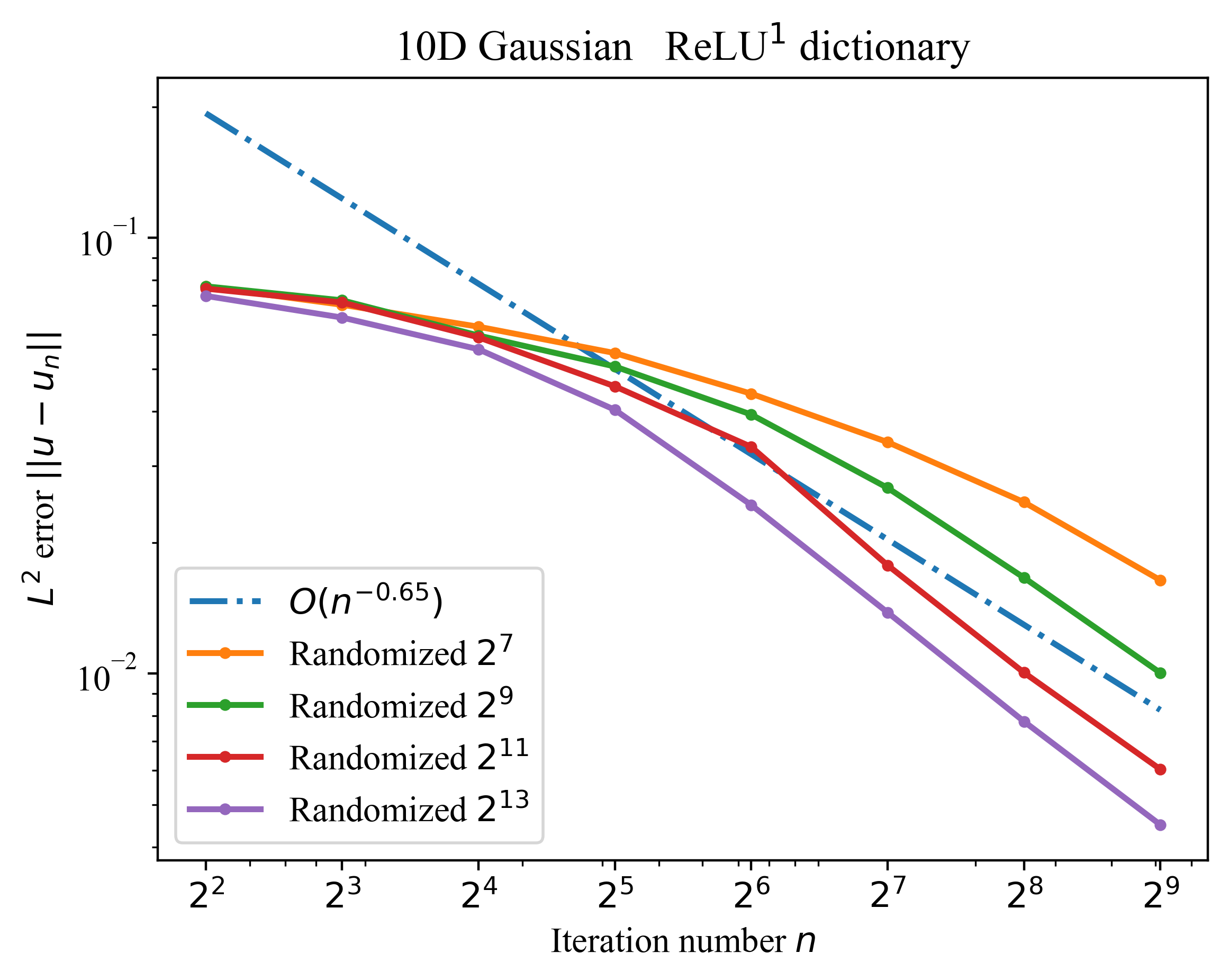}}
    \subfloat[][]{\includegraphics[width=0.45\textwidth]{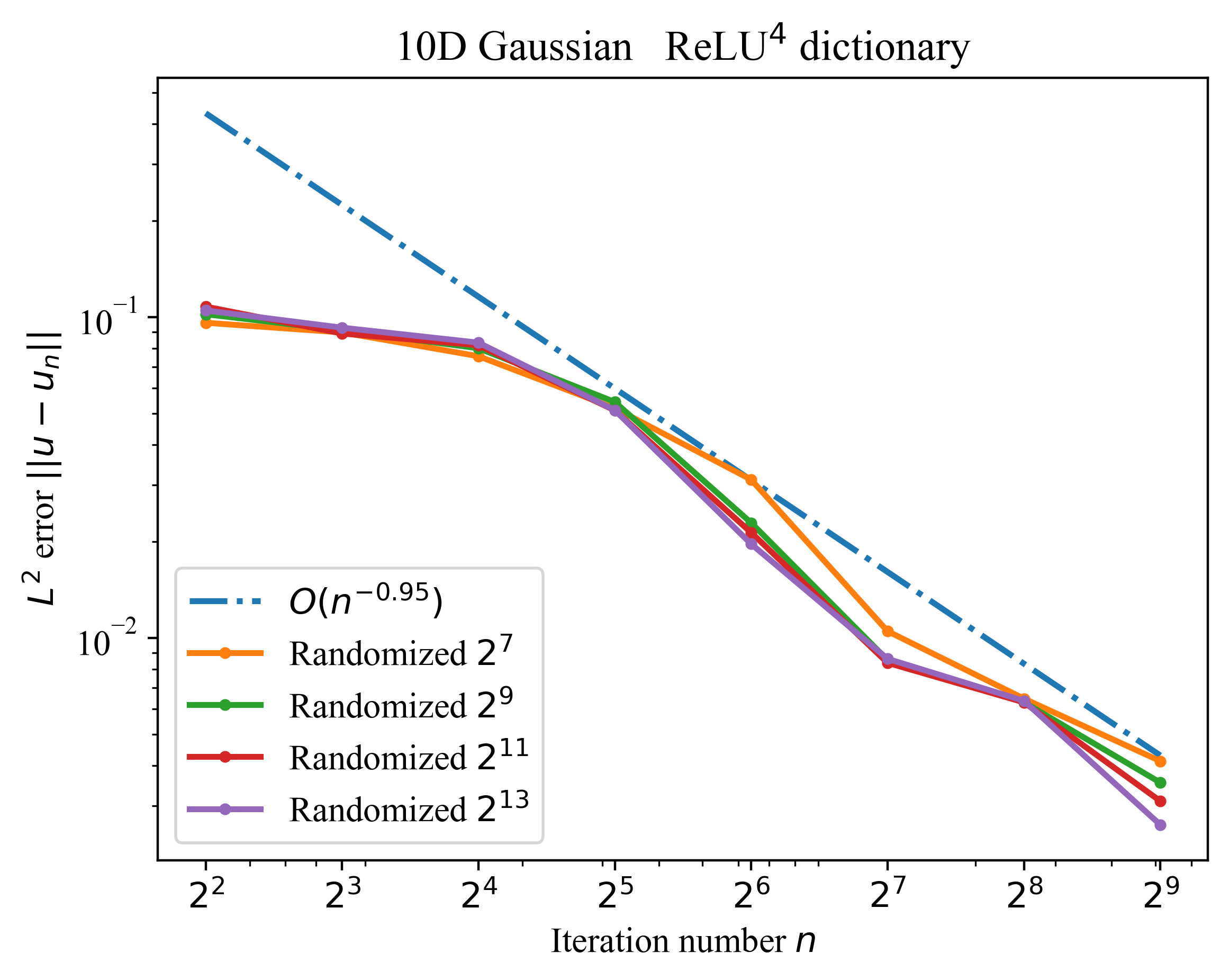}}
    \caption{10D $L^2$-minimization problem \eqref{eq:10gaussian} with randomized dictionaries. \textbf{(a)}: $\operatorname{ReLU}^1$ activation. 
    \textbf{(b)}: $\operatorname{ReLU}^4$ activation. }
    \label{fig:10d-ex1}
\end{figure}

\subsection{Linear Elliptic PDEs}
 We solve the following problem
\begin{equation}\label{eq:neumann-model}
    - \nabla \cdot (\alpha \nabla u) + u = f \quad \text{ in } \Omega, \quad
    \frac{\partial u}{\partial n} = g \quad \text{ on } \partial \Omega.
\end{equation}
Examples of solving this PDE in 1D and 2D with exact solutions $u(x_1) = \cos(\pi x_1)$ and $u(x_1,x_2) = \cos(\pi x_1)\cos(\pi x_2)$, respectively, are already demonstrated in \cite{SHJHX:2023} using OGA with a deterministic dictionary, while higher dimensional examples are not possible due to huge computational costs. 
Therefore, to showcase the effectiveness of OGA with randomized dictionaries, we carry out experiments in 3D and higher dimensions.  
We only carry out experiments with randomized dictionaries since comparisons with using deterministic dictionaries are already demonstrated in numerous $L^2$-minimization problems. 

We consider the following three-dimensional example on $\Omega = (0,1)^3$ for~\eqref{eq:neumann-model}:
\begin{equation}
    \label{eq:3d-neumann-1}
    \Omega = (0,1)^3, \quad
    \alpha = 1, \quad
    f(x) = (1 + 3\pi^2) \prod_{i = 1}^3\cos \pi x_i, \quad g(x) = 0,  
\end{equation}
whose exact solution is $u(x) = \prod_{i = 1}^3\cos \pi x_i$.

We also consider the following three-dimensional example on $\Omega = (0,1)^3$ for~\eqref{eq:neumann-model} with an oscillatory coefficient function $\alpha$:
\begin{equation}
    \label{eq:3d-neumann-2}
    \alpha(x) = \frac{1}{2}\sin(6\pi x_1) + 1, 
    \quad
    u(x) = \prod_{i = 1}^3\cos \pi x_i, \quad g(x)= 0.
\end{equation}
The corresponding right hand side $f(x)$ is computed. 

We then consider the following four-dimensional and ten-dimensional examples on $\Omega = (0,1)^d$ for~\eqref{eq:neumann-model} with a constant $\alpha$:
\begin{equation}
    \label{eq:4d-neumann-1}
    \alpha = 1, 
    \quad
    u(x) = \exp \left( -  \sum_{ i= 1}^d c(x_i - \omega)^2 \right), ~c = \frac{7.03}{d}, ~d = 4 \text{~or~} 10, ~\omega = 0.5.
\end{equation}
The right hand side $f(x)$ and the boundary condition function $g(x)$ are computed accordingly.

Regarding the numerical integration, for examples \eqref{eq:3d-neumann-1} and \eqref{eq:3d-neumann-2}, we divide the cubic domain into $50^3$ subdomains and use a Gaussian quadrature of order 3 in each subdomain.
For the example \eqref{eq:4d-neumann-1} with $d = 4$, within the domain $\Omega$, we use Quasi Monte-Carlo methods with $5\times 10^5$ integration points generated by the Sobol sequence.
On the boundary, we divide it into $50^3$ subdomains and use a Gaussian quadrature of order 3 in each subdomain.
For the example \eqref{eq:4d-neumann-1} with $d = 10$, we use $2\times 10^6$ integration points and $2\times 10^5$ integration points generated by the Sobol sequence, within the domain and on each side of the boundary, respectively. 
The size of a randomized dictionary used in the 3D and 4D examples is only $2^8$ and that used in the 10D example is $2^{11}$. 

We report the errors under the $L^2$ norm and the $H^1$ seminorm for each experiment in Tables \ref{tab:pde-ex1}, \ref{tab:pde-ex2}, \ref{tab:pde-ex3} and  \ref{tab:pde-ex4}. 
We observe that, in the 3D and 4D cases, the optimal convergence orders are achieved for both $L^2$ and $H^1$ errors. 
We also note that the actual convergence order is consistently slightly better than the optimal convergence order. 
For the 10D example, the overall convergence order is also better the optimal one, achieving good accuracy despite a drop in the order at the neuron number $256$. 

\begin{table}[H]
    \centering
    \begin{tabular}{ccccc}
    \hline 
        $n$ & $\|u - u_n \|_{L^2}$  &  order $(n^{-5/3})$& $|u - u_n |_{H^1}$ & order $(n^{-4/3})$ \\ \hline 
        16 & 9.56e-2 & * &1.54e0 & *\\
        32 & 2.34e-2 & 2.03 & 4.93e-1 &1.65 \\
        64 &5.07e-3&2.21 & 1.41e-1& 1.81 \\
        128 & 9.75e-4 	&2.38 &3.75e-2 & 1.91 \\
         256& 2.43e-4 & 2.01 & 1.26e-2& 1.58\\ 
          512& 6.75e-5 &1.84 & 4.55e-3 & 1.47 \\ \hline 
    \end{tabular}
    \caption{ $L^2$ and $H^1$ errors using $\operatorname{ReLU}^3$ activation for 3D Neumann problem with \eqref{eq:3d-neumann-1}.}
    \label{tab:pde-ex1}
\end{table}

\begin{table}[H]
    \centering
    \begin{tabular}{ccccc}
    \hline 
        $n$ & $\|u - u_n \|_{L^2}$  &  order $(n^{-5/3})$& $|u - u_n |_{H^1}$ & order $(n^{-4/3})$\\ \hline 
        16 &9.56e-2 & * & 1.54e0 & *  \\
        32 &3.31e-2 &1.53 & 6.70e-1 & 1.20 \\
        64 & 4.86e-3 & 2.77& 1.32e-1 & 2.35 \\
        128 & 1.11e-3& 2.13& 4.2e-2 &1.65 \\
         256&2.65e-4 & 2.07 &  1.34e-2& 1.65\\ 
          512& 6.49e-5&2.03 & 4.32e-3 & 1.64 \\ \hline 
    \end{tabular}
    \caption{$L^2$ and $H^1$ errors using $\operatorname{ReLU}^3$ activation for 3D Neumann problem with \eqref{eq:3d-neumann-2}.}
    \label{tab:pde-ex2}
\end{table}

\begin{table}[H]
    \centering
    \begin{tabular}{ccccc}
    \hline 
        $n$ & $\|u - u_n \|_{L^2}$  &  order $O(n^{-1.625})$& $|u - u_n |_{H^1}$ & order $O(n^{-1.375})$\\ \hline 
        16 & 7.49e-2 & * &1.16e0 & *\\
        32 & 5.31e-2 & 0.50 &1.03e0 & 0.173 \\
        64 & 1.08e-2&2.30  & 2.64e-1 & 1.96 \\
        128 & 3.81e-3& 1.50 & 1.02e-1 &  1.36\\
         256& 4.91e-4 & 2.95 & 1.69e-2& 2.60\\ 
          512& 1.32e-4 & 1.90 & 5.60e-3& 1.59 \\ \hline 
    \end{tabular}
    \caption{$L^2$ and $H^1$ errors using $\operatorname{ReLU}^4$ activation for 4D Neumann problem with \eqref{eq:4d-neumann-1}.}
    \label{tab:pde-ex3}
\end{table}

\begin{table}[H]
    \centering
    \begin{tabular}{ccccc}
    \hline 
        $n$ & $\|u - u_n \|_{L^2}$  &  order $O(n^{-0.95})$& $|u - u_n |_{H^1}$ & order $O(n^{-0.85})$\\ \hline 
        16& 1.01e-1 & * &  1.72e0 &    * \\
        32 & 6.69e-2 & 0.59 &  1.19e0  &   0.53\\
        64 & 2.16e-2 &1.63  &  4.73e-1 &    1.33 \\
        128 & 8.79e-3 &   1.29 &  2.53e-1  & 0.90 \\
         256& 6.58e-3 & 0.42 &    2.16e-1  & 0.23 \\ 
          512& 3.28e-3 &1.00 &   9.08e-2  &  1.25 \\ \hline 
    \end{tabular}
    \caption{$L^2$ and $H^1$ errors using $\operatorname{ReLU}^4$ activation for 10D Neumann problem with \eqref{eq:4d-neumann-1}.}
    \label{tab:pde-ex4}
\end{table}


\begin{subsection}{Nonlinear elliptic PDEs}
We consider the nonlinear Poisson--Boltzmann equation
\begin{equation} \begin{split}
\label{eq:PB}
- \Delta u + \sinh (\alpha u) = f \quad &\text{ in } \Omega, \\
\frac{\partial u}{\partial n} = g \quad &\text{ on } \partial \Omega,
\end{split} \end{equation}
where $\alpha > 0$.

For the numerical experiments, we set $\alpha = 1$, $\Omega = (0,1)^d$ in \eqref{eq:PB}. 
we use the following Gabor function as the exact solution
\begin{equation}
    \label{eq:gabor}
    u(x)  = \exp(-\frac{\sum_{i = 1}^d (x_i - 0.5)^2}{2 \sigma^2}) \cos(2\pi m x_1 ),  
\end{equation}
where $\sigma = 0.15$ and $ m = 4$, 
and compute the corresponding right hand side function $f$ and the boundary condition $g$.

We also carry out numerical experiments in dimensions 1, 2, 3.
We use the $ReLU^k$ shallow neural network with $k = 3$. 
For numerical integration, in 1D, we divide the interval into $2^{14}$ uniform subintervals and use a Guassian quadrature of order 3 on each subinterval. 
In 2D, we divide the square domain into $600^2$ square subdomains and use a Gaussian quadrature of order 2.
In 3D, we divide the cubic domain into $50^3$ cubic subdomains and use a Gaussian quadrature of order 2
In all these cases, we use a randomized dictionary of size dictionary size 512.
The results are shown in Tables \ref{tab:pde2-1d-ex1},\ref{tab:pde2-2d-ex1}, and \ref{tab:pde2-3d-ex1}.

\begin{table}[H]
    \centering
    \begin{tabular}{ccccc}
    \hline 
$n$  & 	 $\|u-u_n \|_{L^2}$ & 	 order $O(n^{-4})$& 	 $ | u -u_n |_{H^1}$ & 	 order $O(n^{-3})$ \\ \hline \hline 
8 		 &  3.43e-01 &  		* &  		 7.60e+00 &  		 * \\ \hline  
16 		 &  8.89e-03 &  		 5.27 &  		 5.21e-01 &  		 3.87 \\ \hline  
32 		 &  4.06e-04 &  		 4.45 &  		 4.94e-02 &  		 3.40 \\ \hline  
64 		 &  2.26e-05 &  		 4.17 &  		 6.11e-03 &  		 3.02 \\ \hline  
128 		 &  1.07e-06 &  		 4.40 &  		 6.25e-04 &  		 3.29 \\ \hline  
256 		 &  6.37e-08 &  		 4.07 &  		 7.64e-05 &  		 3.03 \\ \hline 
    \end{tabular}
    \caption{ 1D: $L^2$ and $H^1$ errors using $\operatorname{ReLU}^3$ activation for \eqref{eq:PB} with exact solution \eqref{eq:gabor}}
    \label{tab:pde2-1d-ex1}
\end{table}

\begin{table}[H]
    \centering
    \begin{tabular}{ccccc}
    \hline 
$n$  & 	 $\|u-u_n \|_{L^2}$ & 	 order $O(n^{-2.25})$ & 	 $ | u -u_n |_{H^1}$ & 	 order $O(n^{-1.75})$ \\ \hline \hline 
16 		 &  1.88e-01 &  		* &  		 5.65e+00 &  		 * \\ \hline 
32 		 &  1.39e-01 &  		 0.44 &  		 4.58e+00 &  		 0.30 \\ \hline  
64 		 &  3.64e-02 &  		 1.93 &  		 1.60e+00 &  		 1.52 \\ \hline  
128 		 &  6.12e-03 &  		 2.58 &  		 4.23e-01 &  		 1.92 \\ \hline  
256 		 &  8.37e-04 &  		 2.87 &  		 9.24e-02 &  		 2.19 \\ \hline  
512 		 &  1.42e-04 &  		 2.56 &  		 2.33e-02 &  		 1.99 \\ \hline 
    \end{tabular}
    \caption{2D: $L^2$ and $H^1$ errors using $\operatorname{ReLU}^3$ activation for \eqref{eq:PB} with exact solution \eqref{eq:gabor}}
    \label{tab:pde2-2d-ex1}
\end{table}

\begin{table}[H]
    \centering
    \begin{tabular}{ccccc}
    \hline 
$n$  & 	 $\|u-u_n \|_{L^2}$ & 	 order $O(n^{-1.67})$ & 	 $ | u -u_n |_{H^1}$ & 	 order $O(n^{-1.33})$ \\ \hline \hline 
32 		 &  9.69e-02 &  	* &  	 3.38e+00 &  		 * \\ \hline  
64 		 &  8.72e-02 &  		 0.15 &  		 3.15e+00 &  		 0.10 \\ \hline  
128 		 &  5.52e-02 &  		 0.66 &  		 2.28e+00 &  		 0.46 \\ \hline  
256 		 &  2.25e-02 &  		 1.29 &  		 1.15e+00 &  		 1.00 \\ \hline  
512 		 &  5.47e-03 &  		 2.04 &  		 3.83e-01 &  		 1.58 \\ \hline  
1024 		 &  1.36e-03 &  		 2.01 &  		 1.29e-01 &  		 1.57 \\ \hline 
    \end{tabular}
    \caption{3D: $L^2$ and $H^1$ errors using $\operatorname{ReLU}^3$ activation for \eqref{eq:PB} with exact solution \eqref{eq:gabor}}
    \label{tab:pde2-3d-ex1}
\end{table}

\end{subsection}

\begin{subsection}{General Linear Elliptic PDEs}
To further demonstrate the broad application of the randomized OGA, we solve a more general elliptic PDE of the form 
\begin{equation}
    \begin{aligned}
    - \nabla \cdot \left( a(x) \nabla u \right) + \textbf{b}(x)\cdot \nabla u + c(x)u & = f(x), \quad \text{in} \quad \Omega = (0,1)^d,\\
    \frac{\partial u}{\partial n} & = g, \quad \text{on} \quad \Omega. 
    \end{aligned}
\end{equation}
We remark that, due to the indefiniteness of the PDE, the convergence theorem that we established for OGA does not apply. 
However, we can still apply the randomized OGA (\cref{alg:OGA-randomized-dictionary}) to solve these equations.
To this purpose, we made a slight modification to the argmax subproblem (See \ref{sec:oga-general-elliptic}). 
In addition, as mentioned in \cref{Rem:other-activation}, we can also use other activation functions for the randomized orthogonal greedy algorithm. 
For the following experiments, we test both the $\text{ReLU}^k$ and the $\tanh$ activation functions. 
We consider the following setting for numerical experiments: 
\begin{equation}\label{eq:3d-general-1}
   d = 3, a(x) \equiv 1, \textbf{b}(x) \equiv (5,5,5)^T, c(x)\equiv -4, u(x) = \prod_{i=1}^3 \cos(2\pi x_i).
\end{equation}
The right-hand side $f(x)$ is computed correspondingly.
Regarding numerical integration, we divide the cubic domain into $50^3$ uniform subdomains and use a Gaussian quadrature of order 3 in each subdomain.

We report the errors under the $L^2$ norm and the $H^1$ seminorm for each experiment in Table \ref{tab:pde-ex5}.
We also test using the tanh activation function for the shallow neural network. 
The errors are reported in Table \ref{tab:pde-ex6}. 
Parameters of the randomized discrete dictionary are sampled from a uniform distribution on $[-R_m, R_m]^{d+1}$, where $R_m$ needs to be manually tuned and is set to 0.4. 
For both $\text{ReLU}^3$ and $\tanh$ activations, we observe very good convergence. 
In particular, we see that using the $tanh$ activation function yields better convergence results compared to using $\text{ReLU}^3$ activation. However, the range of the uniform distribution in this case is not known a prior and is determined through trial and error. 
Finally, we remark that, recently, there have been other numerical evidence showing that OGA works for indefinite problems \cite{Hong:2024}. 
\begin{table}[H]
    \centering
    \begin{tabular}{ccccc}
    \hline 
        $n$ & $\|u - u_n \|_{L^2}$  &  order & $|u - u_n |_{H^1}$ & order \\ \hline 
			16 		 &  3.98e-01 &  		 *  &  		 6.67e+00 &  * \\ \hline
			32 &  2.38e+00 &  		 -2.58 &   6.63e+00 &  		 0.01 \\ \hline  
		64  &  1.09e+00 &  		 1.12 &  2.72e+00 &  1.28 \\ \hline  	
    128  &  1.30e-01 &  3.07 & 6.81e-01 &  2.00 \\ \hline  	
    256  &  7.09e-03 &  4.20 &  2.166e-01 &  1.65 \\ \hline  
    512 &  2.65e-03 &  	1.42 &  1.086e-01 &  1.00 \\ \hline 
    \end{tabular}
    \caption{$L^2$ and $H^1$ errors using $\operatorname{ReLU}^3$ activation for a 3D general elliptic PDE with \eqref{eq:3d-general-1}.}
    \label{tab:pde-ex5}
\end{table}
\begin{table}[H]
    \centering
    \begin{tabular}{ccccc}
    \hline 
        $n$ & $\|u - u_n \|_{L^2}$  &  order & $|u - u_n |_{H^1}$ & order \\ \hline 
16 		 &  7.38e-01 &  * &  		 6.53e+00 &  		* \\ \hline  
32 		 &  8.83e+00 &  		 -3.58 &  		 1.01e+01 &  		 -0.63 \\ \hline  
64 		 &  9.09e-01 &  		 3.28 &  	 2.84e+00 &  		 1.83 \\ \hline  
128 		 &  6.63e-02 &  		 3.78 &  		 1.14e+00 &  		 1.31 \\ \hline  
256 		 &  4.88e-03 &  		 3.76 &  		 1.91e-01 &  		 2.58 \\ \hline  
512 		 &  4.05e-04 &  		 3.59 &  		 2.12e-02 &  		 3.17 \\ \hline 
    \end{tabular}
    \caption{$L^2$ and $H^1$ errors using $\tanh$ activation for a 3D general elliptic PDE with \eqref{eq:3d-general-1}. $R_m =0.4$. }
    \label{tab:pde-ex6}
\end{table}

\end{subsection}

\subsection{Fixed-Size Iterative OGA}

The randomized OGA builds a shallow neural network incrementally by adding one neuron at a time.
Here, we introduce a variant called the Fixed-Size Iterative OGA (\cref{alg:iterative_OGA}), which differs from standard OGA in that it maintains a fixed number of neurons.
At each iteration, one neuron is removed and replaced with a newly selected neuron, allowing the network to explore more favorable representations without changing its overall size. Notably, we demonstrate that this method can significantly enhance solutions obtained using the popular random feature method \cite{CCEY:2022}.

\begin{algorithm}[H]
    \caption{Fixed-Size Iterative Orthogonal Greedy Algorithm (Fixed-Size Iterative OGA)}
    \label{alg:iterative_OGA}
    \begin{algorithmic}
        \State \textbf{Initialize:} A shallow neural network $u_n(x) = \sum_{j=1}^n a_j g_j(x)$, $g_j \in \mathbb{D}$ for $j=1,\dots,n$. Define the subspace $H_n = \text{span}\{g_j\}_{j=1}^n$.
        \For{$k = 1, \dots, K$}
            \State $i = (k-1)\mod n + 1 $ \Comment{Cycle through neuron indices}
            \State \textbf{Neuron removal:} Remove $g_i$ to form $\check{H}_{n-1} := \text{span}\{g_j\}_{j \neq i}$. Solve:
            \begin{equation*}
                u_{n-1} = \operatorname*{arg\,min}_{v \in \check{H}_{n-1}} E(v).
            \end{equation*}
            \State \textbf{Generate Randomized Dictionary:} Sample $(\phi_{i,k}, b_{i,k}) \sim \text{Uniform}(R)$ for $i = 1, \dots, N$ to construct:
            \begin{equation*}
                \mathbb{D}_{N,k} = \left\{ \pm \sigma_k(\mathcal{S}(\phi_{i,k}) \cdot x + b_{i,k}) : 1 \leq i \leq N \right\}.
            \end{equation*}
            \State \textbf{Neuron Selection:} Find a new neuron $g_i$:
            \begin{equation*}
                g_i = \operatorname*{arg\,max}_{g \in \mathbb{D}_{N,k}} \langle \nabla E(u_{n-1}), g \rangle.
            \end{equation*}
            \State \textbf{Form New Subspace:} Update the subspace $H_n = \text{span}\{g_j\}_{j=1}^n$ and solve:
            \begin{equation*}
                u_n = \operatorname*{arg\,min}_{v \in H_n} E(v).
            \end{equation*}
        \EndFor
        \State \textbf{Return:} $u_n$.
    \end{algorithmic}
\end{algorithm}

We demonstrate the effectiveness of our Fixed-Size Iterative OGA using an $L^2$-minimization example in 2D. 
We use a ReLU shallow neural network to approximate the target function $u(x_1,x_2) = \cos(4\pi x_1)\cos(2\pi x_2)$.
For the Fixed-Size Iterative OGA, we set the total number of iterations $ K = 2n$.
To initialize the neural network, we apply the random feature method: we first sample the parameters $\{(\omega_i, b_i)\}_{i=1}^n$ from a uniform distribution on \(S^{d-1}\times [c_1, c_2]\), then solve a least squares problem for the coefficients $\{a_i\}_{i=1}^n$.
We run both the Fixed-Size Iterative OGA and the random feature method for 10 independent trials and report the averaged $L^2$ errors.
For numerical integration, the domain $[0,1]^2$ is divided into $100^2$ uniform subdomains, and we use a Gaussian quadrature of order 3 in each subdomain. 
The errors are presented in Table~\ref{tab:iterative_OGA}.
Although our approach incurs a higher computational cost, it significantly improves the solution obtained from the random feature method.

\begin{table}[H]
	\centering
	\begin{tabular}{c|cc|cc}
		\multicolumn{1}{c}{} & \multicolumn{2}{c|}{\textbf{Random feature method}} & \multicolumn{2}{c}{\textbf{Fixed-Size Iterative OGA}}  \\ \hline \hline
		\textbf{$n$} & $\|u - u_n\|_{L^2}$ & order & $\|u - u_n\|_{L^2}$ & order  \\ \hline
		32  & 4.54e-01 &  * &  7.64e-02 &  *   \\ \hline
		64   &2.80e-01 & 0.70 & 2.97e-02 &  	1.36  \\ \hline
		128 & 1.89e-01 &  0.57& 1.25e-02 &  1.25  \\ \hline
        256 & 1.07e-01 &  		 0.82 & 4.91e-03 &  1.34 \\ \hline 
          512 & 3.79e-02 &  		 1.49 & 1.92e-03 &  1.35   \\ \hline 
	\end{tabular}
	\caption{$L^2$ errors: random feature method, Fixed-Size Iterative OGA. The reported errors are averaged over 10 trials.}
     \label{tab:iterative_OGA}
\end{table}

\section{Conclusion}\label{sec:conclusion}

OGA is proven to achieve an optimal convergence rate for approximation problems using shallow neural networks. In this paper, we extend this result by establishing the optimal convergence rate for convex optimization problems, thereby broadening its applicability to nonlinear PDEs.
Regarding the practical application of greedy algorithms, we discuss two approaches to discretize the dictionary: one using a deterministic hyperrectangular grid-based discretization and the other using a random discretization. We use the convergence theory of weak greedy algorithms as the primary tool to analyze these approaches, proving rigorous convergence results for practical greedy algorithms using discrete dictionaries.
Specifically, we show that both the deterministic discretization approach and the randomized one amount to a realization of a weak greedy algorithm, and we provide a quantitative analysis of the discrete dictionary size that is required to achieve a desired convergence order.
Finally, we carry out extensive numerical experiments on solving $L^2$ function approximation problems, linear and nonlinear elliptic PDEs to validate our theoretical convergence analysis and demonstrate the effectiveness of using randomized dictionaries for greedy algorithms in application to neural network optimization problems. 
In addition, we introduce a variant of the randomized OGA called Fixed-Size Iterative OGA, allowing for the training of shallow neural networks with a fixed number of neurons.

\section*{Reproducibility}
The code and data for reproducing the results in the paper are available to download from the link: \url{https://github.com/georgexxu/RandomizedOGA} 
\section*{Acknowledgement}
 The authors would like to thank Dr. Jongho Park for helpful discussions on the paper.

\section*{Declarations}
This work is supported by KAUST Baseline Research Fund. The authors declare no conflict of interest.

 \bibliographystyle{elsarticle-num} 
\bibliography{reference}

\appendix

\section{Local optimization using a damped Newton's method}
\label{sec:local-optimization}
In this section, we describe the damped Newton's method with Levenberg-Marquardt regularization \cite{Nocedal:1999, Fletcher2000, VB:2004convex} that we tested as the local optimization algorithm for the argmax subproblem.

At the $n$-th step of a weak greedy algorithm, we want to find $g_n$ such that 
\begin{equation}
    \langle \nabla E(u_{n-1}), g_n \rangle  \geq \gamma_n \mathop{\max}_{g \in \mathbb{D}} \langle \nabla E(u_{n-1}), g \rangle .
\end{equation}
That is, we want to approximately solve $\mathop{\arg\max}_{g \in \mathbb{D}}  \langle \nabla E(u_{n-1}), g \rangle $. 
We state the following equivalent formulations: 
\begin{enumerate}
    \item \begin{equation}
    \omega_n, b_n = \mathop{\arg \max}_{ (\omega, b) \in S^{d-1} \times [c_1,c_2]} |\langle \nabla E(u_{n-1}), \sigma_k(\omega \cdot x + b) \rangle |.
\end{equation}
\item \begin{equation}
    \omega_n, b_n = \mathop{\arg \min}_{ (\omega, b) \in S^{d-1} \times [c_1,c_2]}  \left\{ L(\omega,b) := - \frac{1}{2} \langle \nabla E(u_{n-1}), \sigma_k(\omega \cdot x + b) \rangle^2 \right\} .
\end{equation}
\item \begin{equation}\label{eq:argmax_minimization}
    \phi_n, b_n = \mathop{\arg \min}_{(\phi,b) \in R} \left\{L(\phi,b) = := - \frac{1}{2} \langle \nabla E(u_{n-1}), \sigma_k(\mathcal{S}(\phi) \cdot x + b) \rangle^2  \right\},
\end{equation}
where $\mathcal{S}: [0, \pi]^{d-2} \times [0, 2\pi) \to S^{d-1}$ is the hypersherical coordinate map from a hyprerectangle to a sphere in $\mathbb{R}^d$ given by \eqref{eq:hyperspherical}, and 
$R = [0, \pi]^{d-2} \times [0, 2\pi) \times [c_1,c_2]$ as defined in \eqref{parameter_space}.
\end{enumerate}

We deal with the third formulation \eqref{eq:argmax_minimization}. 
The local optimization algorithm is presented below. 

\begin{algorithm}[H]
\caption{Argmax solver in OGA}
\label{alg:argmax_minimization}
\begin{algorithmic} 
    \State \textbf{Sample:} 
           $\mathbb{D}_N = \{ (\phi^i, b^i) \}_{i=1}^N \sim \text{Uniform}(R)$ 
           \quad \text{(where $R$ is a hyperrectangle as defined in \eqref{parameter_space})}
    \State \textbf{Evaluate:} 
           Compute $L(\phi^i,b^i)$ for all $(\phi^i,b^i) \in \mathbb{D}_N$
    \State \textbf{Screen:} 
           Let $N_1 = \lceil sN \rceil$, where $s \in (0,1)$, and define $\tilde{\mathbb{D}}_{N_1}$ as the $N_1$ pairs with the smallest $L$ values
    \State $\hat{\mathbb{D}} \gets \emptyset$

    \ForAll {$(\phi^0,b^0) \in \tilde{\mathbb{D}}_{N_1}$ } \Comment{In parallel}
        \For{$k=1,\ldots,K$}
            \State $H \gets \bigl[\mathrm{H}_L(\phi^{k-1}, b^{k-1})\bigr] $
            \State $(\delta \phi,\delta b) 
                   \gets 
                   -H^{-1}\,
                    \nabla L(\phi^{k-1}, b^{k-1})$
            \State $\tau \gets 1$
            \State $\lambda =\max \left( \max_i| H_{i,i} | , \epsilon \right)$
            \While{\(
                L(\phi^{k-1} + \tau \delta \phi, b^{k-1} + \tau \delta b) 
                \;>\;
                L(\phi^{k-1}, b^{k-1}) 
                + 
                c \tau \langle \nabla L(\phi^{k-1}, b^{k-1}), (\delta \phi, \delta b)\rangle
            \)}
                \State $\tau \gets \beta \tau$, $\beta \in (0,1)$
                \State $\lambda \gets 2 \lambda $
                \State $(\delta \phi,\delta b) 
                   \gets 
                   -(H + \lambda I)^{-1}\,
                    \nabla L(\phi^{k-1}, b^{k-1})$ \Comment{Levenberg-Marquardt regularization}
            \EndWhile
            \State $(\phi^k, b^k) \gets \bigl(\phi^{k-1} + \tau\,\delta \phi,\; b^{k-1} + \tau\,\delta b\bigr)$
        \EndFor
        \State $\hat{\mathbb{D}} \gets \hat{\mathbb{D}} \,\cup\, \{(\phi^K,b^K)\}$
    \EndFor

    \State \textbf{Final selection:} 
           \[
             (\phi^n,b^n) 
             \gets 
             \mathop{\arg\min}_{(\phi,b) \in \mathbb{D}_N \cup \hat{\mathbb{D}}} 
             L(\phi,b)
           \]
    \State \textbf{Return:} $(\phi^n,b^n)$
\end{algorithmic}
\end{algorithm}
We now discuss an experimental setting to examine the effectiveness of local optimization. 
We solve the $L^2$-minimization problem in 2D. 
We compare the errors to examine whether the use of a local minimization leads to better accuracy. 
We fit the function $u(x_1,x_2) = \sin(\pi x_1)\sin(\pi x_2)$ using a ReLU$^k$ shallow neural network with $k =3$. 
In the randomized OGA, we use a discrete dictionary of size $N = 256$. 
We run the randomized OGA with and without local optimization for 10 trials, respectively, and compute the average $L^2$ error for comparison. 
In the local optimization, we set $s = 0.2$, $\beta = 0.8$, $c = 10^{-4}$. 
For numerical integration, we divide the square domain $[0,1]^2$ into $100^2$ uniform square subdomains and use a Gaussian quadrature of order 3 in each subdomain. 
The comparison result is shown in Table \ref{tab:local_opt_compare}. 
We can observe that the accuracy is improved after using local optimization. 

\begin{table}[H]
	\centering
	\begin{tabular}{c|cc|cc}
		\multicolumn{1}{c}{} & \multicolumn{2}{c|}{\textbf{With Local Optimization}} & \multicolumn{2}{c}{\textbf{Without Local Optimization}} \\ \hline \hline
		\textbf{$n$} & $\|u - u_n\|_{L^2}$ & order & $\|u - u_n\|_{L^2}$ & order \\ \hline
		4     & 1.64e-01 & *    &   1.86e-01 &		 *    \\ \hline
		8     &  8.77e-02 &   0.90   &  9.23e-02 &  		 1.01  \\ \hline
		16   &  3.46e-03 &  		 4.66  &  5.28e-03 &  		 4.13  \\ \hline
		32  &  6.95e-04 &  		 2.32  & 7.85e-04 &  		 2.75 \\ \hline
		64   &  1.11e-04 &  		 2.65  & 1.28e-04 &  		 2.62  \\ \hline
		128  &  1.86e-05 &  		 2.58  &  2.14e-05 &  		 2.58   \\ \hline
		256 &  3.03e-06 &  		2.61  & 3.25e-06 &  		 2.72  \\ \hline 
	\end{tabular}
	\caption{Comparison of the $L^2$ errors $\|u - u_n\|_{L^2}$ and convergence order for methods with and without using the local optimization in the argmax subproblem. The reported errors are averaged over 10 trials.}
	\label{tab:local_opt_compare}
\end{table}

\section{Randomized OGA for solving the general elliptic PDE}
\label{sec:oga-general-elliptic}

In this section, we describe how we can modify the randomized OGA slightly to obtain an algorithm for solving the general elliptic PDE. 

\begin{algorithm}[H]
\caption{OGA with randomized discrete dictionaries}
\label{alg:OGA-randomized-general-elliptic}
\begin{algorithmic}
    \State Given $u_0 = 0$,

    \For{$n = 1, 2, \dots$}
        \State Sample $(\phi_{i,n}, b_{i,n}) \sim \mathrm{Uniform}(R)$ for $1 \leq i \leq N$, to generate a randomized discrete dictionary
        \begin{equation*}
            \mathbb{D}_{N, n} = \left\{ \sigma_k ( \mathcal{S} (\phi_{i, n}) \cdot x + b_{i, n} ) : 1 \leq i \leq N \right\}.
        \end{equation*}
        \State $\displaystyle
        g_n = \operatornamewithlimits{\arg\max}_{g \in \mathbb{D}_{N,n}} \left| \langle \nabla g, \nabla u_{n-1}\rangle  + \langle g, \textbf{b}(x) \cdot \nabla u_{n-1}(x) + c(x) u_{n-1}(x) -f(x)\rangle \right|.$
        \State  Solve the variational problem: Find $u_n \in H_n :=\operatorname{span} \{ g_1, \dots, g_n \}$ such that 
        \begin{equation}
            \langle \nabla u_n , \nabla g_i \rangle + \langle \textbf{b}(x) \cdot \nabla u_{n}, g_i \rangle + \langle 
             c u, g_i\rangle = \langle f, g_i \rangle ~~ \text{for all~} g_i, i = 1,2,...,n. 
        \end{equation}
    \EndFor
\end{algorithmic}
\end{algorithm}

\end{document}